\definecolor{forestgreen}{rgb}{0.13, 0.55, 0.13}
\newcommand{\IND}{\mathbf{1}}
\newcommand{\cF}{\mathcal{F}}
\newcommand{\cP}{\mathcal{P}}
\newcommand{\cV}{\mathcal{V}}
\newcommand{\cL}{\mathcal{L}}
\newcommand{\bbR}{\mathbb{R}}
\newcommand{\bbP}{\mathbb{P}}
\newcommand{\bbE}{\mathbb{E}}
\newcommand{\De}{\mathrm{d}}
\def\msk{\mathsf{K}}
\def\msc{\mathsf{C}}
\newcommandx{\Vnorm}[2][1=V]{\| #2 \|_{#1}}
\newcommandx{\norm}[2][1=]{\ifthenelse{\equal{#1}{}}{\left\Vert #2 \right\Vert}{\left\Vert #2 \right\Vert^{#1}}}
\newcommandx{\normLigne}[2][1=]{\ifthenelse{\equal{#1}{}}{\Vert #2 \Vert}{\Vert #2\Vert^{#1}}}
\newcommand{\defEns}[1]{\left\lbrace #1 \right\rbrace }
\def\eqsp{\;}
\def\transpose{\top}
\DeclareMathOperator{\Id}{\mathrm{Id}}
\renewcommand{\epsilon}{\varepsilon}
\newcommand{\ip}[2]{\langle #1,#2\rangle}
\def\const{\mathrm{C}}
\newcommand{\TV}{\mathrm{TV}}
\title{A coupling approach to Lipschitz transport maps}
\author{Giovanni Conforti, Katharina Eichinger }
\begin{document}

\maketitle

\begin{abstract}
   In this note, we propose a probabilistic approach to bound the (dimension-free) Lipschitz constant of the Langevin flow map on $\bbR^d$ introduced by Kim and Milman \cite{kim2012GeneralizationCaffarellisContraction}. As example of application, we construct Lipschitz maps from a uniformly $\log$-concave probability measure to $\log$-Lipschitz perturbations as in  \cite{fathi2024TransportationLogLipschitzPerturbations}. Our proof is based on coupling techniques applied to the stochastic representation of the family of
   vector fields inducing the transport map. This method is robust enough to relax the uniform convexity to a weak asymptotic convexity condition and to remove the bound on the third derivative of the potential of the source measure.
\end{abstract}

\section{Introduction}
The goal of this note is to provide an alternative proof of the Lipschitz continuity of the transport map introduced by Kim and Milman \cite{kim2012GeneralizationCaffarellisContraction} with an explicit (dimension-free) bound on the Lipschitz constant. Given $\mu, \nu \in \cP(\bbR^d)$, a transport map from $\mu$ to $\nu$  is a map $T: \bbR^d \rightarrow \bbR^d$ such that $T$ is Borel-measurable and $T_{\#}\mu = \nu$.
By now it is well-understood that Lipschitz transport maps are a powerful tool to transfer analytic properties such as functional inequalities from one measure to another, see for instance \cite{mikulincer2023LipschitzPropertiesTransportation} for an overview of applications.

A natural candidate for this is the map $T$ coming from optimal transport (OT).
In a seminal work, Caffarelli \cite{caffarelli2000MonotonicityPropertiesOptimal} has proven that the OT map from the Gaussian measure to a uniformly log-concave measure is Lipschitz continuous. A generalization to OT maps from a uniformly $\log$-concave measure to compactly supported pertubation has been provided in \cite{colombo2015LipschitzChangesVariables},
see also \cite{kolesnikov2011MassTransportationContractions} for a review.
Due to the nature of optimal transport maps being the solution of an optimization problem, there is no general explicit construction known. This makes it quite challenging to obtain further (quantitative) regularity results. 

A more constructive approach towards existence of Lipschitz maps between measures has been proposed in the seminal work by Kim and Milman \cite{kim2012GeneralizationCaffarellisContraction}. Their construction of a transport map is based on the time-reversal of Langevin dynamics. We will therefore call this map Langevin transport map. Since then there have been a series of works proving Lipschitz continuity of the Langevin transport map under various assumptions, see for instance \cite{neeman2022LipschitzChangesVariables,klartag2023SpectralMonotonicityGaussian,fathi2024TransportationLogLipschitzPerturbations,lopez-rivera2024BakryEmeryApproachLipschitz,chewi2024Uniformin$N$LogSobolevInequality,stephanovitch2024SmoothTransportMap}. Similar techniques have also been employed along the Polchinski flow in \cite{serres2024BehaviorPoincareConstant,shenfeld2024ExactRenormalizationGroups}.

The recent works \cite{fathi2024TransportationLogLipschitzPerturbations} and \cite{lopez-rivera2024BakryEmeryApproachLipschitz} both provide a proof on the bound of the Lipschitz constant of the Langevin transport map between a uniformly log-concave probability measure and a $\log$-Lipschitz perturbation. The purpose of our work is to provide an alternative proof to bound the Lipschitz constant in this setting by means of a coupling argument. For this we leverage the fact that the vector fields inducing the Langevin transport map admit a probabilistic representation in form of a stochastic control problem to prove gradient and Hessian bounds using reflection coupling \cite{eberle2016ReflectionCouplingsContraction} adapted to control, already used in \cite{conforti2023CouplingReflectionControlled} and \cite{cecchin2024ExponentialTurnpikePhenomenon} to study ergodicity properties of (non-linear) optimal control problems. The flexibility of coupling arguments also allows to for treatment of different perturbations as already hinted in \cite{conforti2024WeakSemiconvexityEstimates}. We reserve a detailed study in this direction for future work.

Let us also remark that there are other approaches based on reflection coupling to prove analytic properties of asymptotic $\log$-concave measures such as a Poincaré inequality or a weak gradient commutation estimate \cite{cattiaux2014SemiLogConcaveMarkov}.

This note should be rather thought of as a proof of concept of the robustness of coupling techniques. As a cherry on top, we manage to relax the hypothesis of uniform $\log$-concavity to a mere asymptotic $\log$-concavity, defined precisely below, and to provide an alternative hypothesis to the control in the third derivative, which is needed in both \cite{fathi2024TransportationLogLipschitzPerturbations} and \cite{lopez-rivera2024BakryEmeryApproachLipschitz}.

The article is structured as follows. In \Cref{sec:transport_map_opt_cont} we provide a formal description of the Langevin transport map and how it is related to an optimal control problem. \Cref{sec:coup_refl} is devoted to the description of coupling by reflection applied to controlled dynamics and its contractive properties. With this at hand we can prove gradient and Hessian estimates in \Cref{sec:bds_Hess} which are needed to prove Lipschitzianity of the Langevin transport map.

\paragraph{Acknowledgements:} \emph{Giovanni Conforti wishes to dedicate this note to Patrick Cattiaux, thanking him for  his support and for contributing to shape his scientific interests.} \\Katharina Eichinger acknowledges the support of the Agence nationale de la recherche, through the PEPR PDE-AI project (ANR-23-PEIA-0004).

\section{Construction of the Langevin transport map and relation to optimal control}\label{sec:transport_map_opt_cont}
\subsection{Construction of the Langevin transport map by Kim and Milman}
We start by describing the (formal) construction of the Langevin transport map introduced by Kim and Milman \cite{kim2012GeneralizationCaffarellisContraction} by inverting the overdamped Langevin dynamics, where we refer to \cite{fathi2024TransportationLogLipschitzPerturbations, mikulincer2023LipschitzPropertiesTransportation} for the rigorous justification of the construction. Let $\mu(\De x) \propto e^{-U(x)}\De x$ 
for $U \in C^1(\bbR^d)$. Denote by $(X^x_t)_{t\geq 0}$ be the overdamped Langevin dynamics associated to $\mu$ starting at $x \in \bbR^d$, i.e.
\begin{equation}\label{eq:Langevin_SDE}
    \De X^x_t = -\nabla U(X^x_t) \De t + \sqrt{2} \De B_t, \quad X^x_0 = x.
\end{equation}
Denote by $(P_t)_{t\geq 0}$ the associated semigroup, defined by
\begin{equation}
    P_tf(x) := \bbE[f(X^x_t)].
\end{equation}

Suppose from now on that $x$ is a random variable distributed according to $\nu$ with $\nu(\De x) \propto e^{-W(x)}\mu(\De x)$ for $W:\bbR^d \rightarrow \bbR$.
It is well known that $\mu_t = \cL(X_t)$, more precisely its density w.r.t. the Lebesgue measure $\rho_t = \frac{\De \mu_t}{\De \cL^d}$, satisfies the continuity equation
\begin{equation}
    \partial_t \rho_t = \nabla \cdot (\rho_t(\nabla U + \nabla \log \rho_t)) = \nabla \cdot (\rho_t \nabla \log \frac{\De \mu_t}{\De \mu}), \quad \rho_0 = \frac{\De \nu}{\De \cL^d}.
\end{equation}
Now using the semigroup representation $\frac{\De \mu_t}{\De \mu} = P_t e^{-W}$ we arrive at
\begin{equation}
    \partial_t \rho_t =  \nabla \cdot (\rho_t \nabla \log P_t e^{-W}), \quad \rho_0 = \frac{\De \nu}{\De \cL^d}.
\end{equation}
Let us set $V_t = - \log P_t e^{-W}$. By virtue of the continuity equation the vector field $\nabla V_t$ induces a transport map from $\mu_{0}$ to $\mu_{t}$ via the map $S_t$ defined by
\begin{equation}
    \frac{\De}{\De t} S_t = \nabla V_t(S_t), \quad S_0 = \Id,
\end{equation}
i.e. $(S_t)_{\#}\mu_{0} = \mu_{t}$. 

Suppose now that $U$ is nice enough such that $(P_t)_{t\geq 0}$ is ergodic, implying that $(\mu_t)_{t\geq 0}$ converges weakly to $\mu$ as $t \rightarrow \infty$. 
Upon justifying the limit we hence get for $S := \lim_{t \rightarrow \infty} S_t$ that $S_{\#}\nu = \mu$. To get a transport map from $\mu$ to $\nu$, we invert the maps. Define $T_t := S_t^{-1}$, so $(T_t)_{\#}\mu_{t} = \mu_{0} = \nu$. Then again setting $T = \lim_{t \rightarrow \infty} T_t$ under suitable regularity hypothesis we have $T_{\#}\nu = \mu$. 

The map $T$ corresponds to the map constructed by Kim and Milman \cite{kim2012GeneralizationCaffarellisContraction}, which we call here Langevin transport map. The proof strategy we employ in this paper directly gives Lipschitz continuity on both $S$ and $T$, so we decided to include $S$ as well. The key lemma to obtain the Lipschitz estimates of these transport maps is the following lemma, see e.g. \cite[Lemma 11]{mikulincer2023LipschitzPropertiesTransportation}.
\begin{lemma}\label{lem:Est_Lip_heat_flow}
Assume that 
\begin{equation}\label{eq:bound_convexity}
    \lambda_{\max}(t) \succeq \nabla^2 V_t \succeq \lambda_{\min}(t),
\end{equation}
then 
\begin{itemize}
    \item $S$ is Lipschitz with constant $\exp(\int_0^\infty \lambda_{\max}(t)\De t)$,
    \item $T$ is Lipschitz with constant $\exp(-\int_0^\infty \lambda_{\min}(t)\De t)$.
\end{itemize}

\end{lemma}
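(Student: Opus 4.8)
The plan is to reduce the statement to a Gr\"onwall estimate along the characteristic flow of the vector fields $\nabla V_t$. Fix $x, y \in \bbR^d$ and consider the two trajectories $t \mapsto S_t(x)$ and $t \mapsto S_t(y)$ solving $\frac{\De}{\De t} S_t = \nabla V_t(S_t)$ with $S_0 = \Id$. Setting $\delta_t := S_t(x) - S_t(y)$, I would write the increment of $\nabla V_t$ along the segment joining $S_t(y)$ to $S_t(x)$ via the fundamental theorem of calculus as $\nabla V_t(S_t(x)) - \nabla V_t(S_t(y)) = M_t \delta_t$, where $M_t := \int_0^1 \nabla^2 V_t\big(S_t(y) + r\delta_t\big)\,\De r$. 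Since $M_t$ is an average of Hessians, hypothesis \eqref{eq:bound_convexity} gives $\lambda_{\min}(t)\,\Id \preceq M_t \preceq \lambda_{\max}(t)\,\Id$, hence
\begin{equation*}
    2\lambda_{\min}(t)\,|\delta_t|^2 \;\le\; \frac{\De}{\De t}|\delta_t|^2 \;=\; 2\langle \delta_t, M_t \delta_t\rangle \;\le\; 2\lambda_{\max}(t)\,|\delta_t|^2 .
\end{equation*}
Gr\"onwall's lemma then yields, for every $t \ge 0$, the two-sided bound
\begin{equation*}
    |x - y|\,\exp\!\Big(\textstyle\int_0^t \lambda_{\min}(s)\,\De s\Big) \;\le\; |S_t(x) - S_t(y)| \;\le\; |x-y|\,\exp\!\Big(\textstyle\int_0^t \lambda_{\max}(s)\,\De s\Big).
\end{equation*}

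Letting $t \to \infty$ in the upper inequality and using $S_t \to S$ pointwise gives $|S(x) - S(y)| \le |x-y|\exp\big(\int_0^\infty \lambda_{\max}(s)\,\De s\big)$, which is the asserted Lipschitz bound for $S$. For $T = S^{-1}$, I would use the lower inequality: for any pair of points write $u = S(x)$, $v = S(y)$, so that $x = T(u)$, $y = T(v)$, and $|u - v| \ge |T(u) - T(v)|\exp\big(\int_0^\infty \lambda_{\min}(s)\,\De s\big)$ rearranges to $|T(u) - T(v)| \le |u-v|\exp\big(-\int_0^\infty \lambda_{\min}(s)\,\De s\big)$. Equivalently, one can run the same computation at the level of the Jacobian: $DS_t$ solves the linear ODE $\frac{\De}{\De t} DS_t = \nabla^2 V_t(S_t)\,DS_t$, $DS_0 = \Id$, and applying the estimate above to $|DS_t(x)v|^2$ bounds both $\|DS_t\|_{\mathrm{op}}$ and $\|(DS_t)^{-1}\|_{\mathrm{op}}$.

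The computation itself is routine; the substantive points are the regularity and limiting issues, which I would treat as tacitly granted (as in the construction preceding the lemma) and address briefly. Namely: (i) the flow $S_t$ must be well-defined and $C^1$ in space with Jacobian evolving as above, which holds once $\nabla V_t = -\nabla \log P_t e^{-W}$ is $C^1$ in space and continuous in time; (ii) the limits $S = \lim_t S_t$ and $T = \lim_t T_t$ exist, as discussed in \Cref{sec:transport_map_opt_cont}; and (iii) $S_t$ is a diffeomorphism so that $T_t = S_t^{-1}$ makes sense — injectivity and non-degeneracy of the Jacobian follow from the lower bound just derived, and surjectivity onto $\operatorname{supp}\mu_t$ from the continuity equation. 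The only genuine obstacle, beyond bookkeeping, is ensuring these limits and the interchange of limit with the Lipschitz estimate are legitimate; since the bounds are uniform in $t$ on finite horizons and monotone in the relevant exponents, passing to the limit is harmless whenever $\int_0^\infty \lambda_{\max}(s)\,\De s < \infty$ (and the bound for $T$ is nontrivial exactly when $\int_0^\infty \lambda_{\min}(s)\,\De s > -\infty$).
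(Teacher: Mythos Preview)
Your argument is correct and is exactly the standard one: the paper does not give its own proof of this lemma but simply cites \cite[Lemma~11]{mikulincer2023LipschitzPropertiesTransportation}, where the same Gr\"onwall estimate along the flow $\frac{\De}{\De t}S_t = \nabla V_t(S_t)$ is carried out. One minor point: for the bound on $T$ it is slightly cleaner to work at finite $t$ first (derive $|T_t(u)-T_t(v)| \le |u-v|\exp\big(-\int_0^t \lambda_{\min}\big)$ from the lower inequality for $S_t$, using that $T_t = S_t^{-1}$) and only then let $t\to\infty$, rather than writing $u=S(x)$, $v=S(y)$ for the limiting map; but this is cosmetic and you already flag the relevant limit issues.
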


With this in mind, let us state our assumptions to prove bounds on the Hessian of $V_t$. 
Instead of requiring uniform convexity of the potential $U: \bbR^d \rightarrow \bbR$ as in the works \cite{fathi2024TransportationLogLipschitzPerturbations, lopez-rivera2024BakryEmeryApproachLipschitz}, we relax this hypothesis to an asymptotic convexity assumption, formulated in terms of the weak convexity profile of $U$
\begin{equation}
\kappa_{U}(r) = \inf\left\{\frac{\ip{\nabla U(x)-\nabla U(\hat{x})}{x-\hat{x}}}{|x-\hat{x}|^2} : |x-\hat{x}|=r \right\}\eqsp,
\end{equation}
which has already been used in \cite{eberle2016ReflectionCouplingsContraction} to prove exponential ergodicity of \eqref{eq:Langevin_SDE}.

\begin{assumption}\label{ass:U_and_W_Lip}
Assume that $U \in C^2(\bbR^d)$ and $W \in C^1(\bbR^d)$ satisfy
\begin{enumerate}[label=(\roman*)]
\item The convexity profile of $U$ satisfies $\kappa_U \in C( (0,+\infty), \bbR)$ and
\begin{equation}
     \int_{0}^1 r (\kappa_U(r))^- \, \De r < +\infty \, , \quad \liminf_{r \to +\infty} \kappa_U(r) > 0,
\end{equation}
where $(\kappa_U(r))^- := \max\{-\kappa_U(r), 0  \}$.
\item $W$ is Lipschitz continuous with constant $\const^W_1$.
\end{enumerate}
\end{assumption}
\noindent We furthermore require some more regularity on $U$, either \Cref{ass:U_bd_2nd_der} or \Cref{ass:U_bd_3rd_der}.

\begin{assumption}\label{ass:U_bd_2nd_der}
    Assume that $U \in C^2(\bbR^d)$  and that there exists $\const^U_2 \in \bbR$ such that
\begin{equation}
    \sup_{u,x \in \bbR^d, |u|=1} |\ip{u}{\nabla^2 U(x)u}| = \const^U_2 < +\infty.
\end{equation}
\end{assumption}

\begin{assumptionp}{\ref*{ass:U_bd_2nd_der}$'$}\label{ass:U_bd_3rd_der}
    Assume that $U \in C^2(\bbR^d)$ and
    \begin{enumerate}[label=(\roman*)]
    \item\label{item:unif_convex}
    \begin{equation}
        \inf_{u,x \in \bbR^d, |u|=1} \ip{u}{\nabla^2 U(x)u} =: \alpha > - \infty 
    \end{equation}
    \item $\nabla^2 U$ is Lipschitz with constant $\const^U_3$
    \end{enumerate}
\end{assumptionp}

\begin{remark}
If $\alpha > 0$ \Cref{ass:U_bd_3rd_der} together with \Cref{ass:U_and_W_Lip} corresponds exactly to the assumptions imposed in \cite{fathi2024TransportationLogLipschitzPerturbations} in the Euclidean setting. But we will see in our proofs that uniform convexity of $U$ is not necessary for us. 

Up to our knowledge assumptions without any condition on the third derivative for a Lipschitz map towards $e^{-U-W}\De x$ comparable to \Cref{ass:U_bd_2nd_der} are only handled in \cite{brigati2024HeatFlowLogconcavity} where they prove Lipschitzianity of the Langevin transport map from the Gaussian distribution to $e^{-U-W}\De x$. 
We note that the potential associated to the Gaussian distribution automatically satisfies both \Cref{ass:U_bd_2nd_der} and \Cref{ass:U_bd_3rd_der}. Similar results could also be obtained using coupling by following the approach in \cite{conforti2024WeakSemiconvexityEstimates} but we decided to focus on one method for this note.

Here we prove Lipschitzianity of the map from $e^{-U} \De x$ to $e^{-U-W}\De x$, which is different than in \cite{brigati2024HeatFlowLogconcavity}.
\end{remark}
\noindent The following theorem is our main result.

\begin{theorem}\label{thm:heat_flow_Lip_Lip_pert}
Assume \Cref{ass:U_and_W_Lip}, then we have the following properties of the Langevin transport map $T$ and its inverse $S$.
Set
\begin{equation}
    \bar\kappa(r) = \kappa_{U}(r) - \frac{4 \const^W_1}{ C_{\kappa_U}r},
\end{equation}
where $0 < C_{\kappa_U} \leq 1$ is defined in \eqref{def:lambda_kappa_C_kappa}. Furthermore, denote by $0 < \lambda_{\kappa_U}, \lambda_{\bar\kappa} < + \infty$, $0\leq C_{\bar\kappa} \leq 1$ the constants in \eqref{def:lambda_kappa_C_kappa} for the convexity profiles $\kappa_U$ and $\bar\kappa$.
\begin{enumerate}[label=(\roman*)]
\item\label{item:Lip_Hess_bd} If in addition \Cref{ass:U_bd_2nd_der} holds, then both $T$ and $S$ are Lipschitz continuous with constant
\begin{equation}
\exp\left(\frac{2\const^W_1}{\sqrt{\pi\lambda_{\bar{\kappa}}}C_{\bar{\kappa}}} \left(\frac{3e^{\frac{\lambda_{\kappa_U}}{2\lambda_{\bar\kappa}}}\const^U_2}{2\lambda_{\kappa_U}C_{\kappa_U}}
 + \sqrt{2}  \right) \right).
\end{equation}
\item\label{item:Lip_3rd_der_bd} If in addition \Cref{ass:U_bd_3rd_der} holds, then both $T$ and $S$ are Lipschitz continuous with constant
\begin{equation}
\exp\left(\frac{\const^W_1}{C_{\bar{\kappa}}}\left(\frac{\const^U_3}{\lambda_{\kappa_U}\lambda_{\bar\kappa}C_{\kappa_U}} + \frac{3|\alpha| e^{\frac{\lambda_{\kappa_U}}{2\lambda_{\bar\kappa}}}}{2\sqrt{\pi\lambda_{\bar\kappa}}\lambda_{\kappa_U}C_{\kappa_U}} 
   +  \frac{2\sqrt{2}}{\sqrt{\pi\lambda_{\bar\kappa}}} \right)  \right).
\end{equation}
If in addition $\alpha >0$, this bound improves to
\begin{equation}\label{eq:our_bound_vs_Max}
\exp\left( \frac{\const^W_1}{C_{\bar{\kappa}}} \left(  \frac{\const^U_3}{2\lambda_{\bar{\kappa}}\alpha} + \frac{\sqrt{2}}{\sqrt{\pi\lambda_{\bar{\kappa}}}} + \frac{\sqrt{2\lambda_{\bar{\kappa}}}}{\sqrt{\pi e}(\alpha + \lambda_{\bar{\kappa}})} \right) \right).
\end{equation}
\end{enumerate}
\end{theorem}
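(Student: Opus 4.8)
By Lemma \ref{lem:Est_Lip_heat_flow}, both Lipschitz constants of $S$ and $T$ are controlled by $\exp(\pm\int_0^\infty \lambda(t)\,\De t)$ where $\lambda(t)$ is a (signed) bound on the eigenvalues of $\nabla^2 V_t$ with $V_t = -\log P_t e^{-W}$. So the whole theorem reduces to proving a two-sided bound $|\ip{u}{\nabla^2 V_t(x)u}| \le \lambda(t)$ for unit $u$, integrable in $t$, with explicit integral. The plan is therefore: (1) write $\nabla V_t$ via the stochastic control / Feynman--Kac representation announced in the introduction; (2) differentiate twice in the space variable and express $\nabla V_t$ and $\nabla^2 V_t$ as expectations involving derivatives of the Langevin flow $X^x_t$ along the controlled dynamics; (3) bound the first-order sensitivity $\partial_x X^x_t$ and the second-order sensitivity using reflection coupling adapted to the controlled SDE, exactly in the spirit of \cite{conforti2023CouplingReflectionControlled,cecchin2024ExponentialTurnpikePhenomenon,eberle2016ReflectionCouplingsContraction}; (4) assemble the contraction rate coming from the convexity profile $\kappa_U$ (and the correction $\bar\kappa$ accounting for the $\const^W_1$-Lipschitz tilt $W$) into the constants $\lambda_{\kappa_U},\lambda_{\bar\kappa},C_{\kappa_U},C_{\bar\kappa}$; (5) integrate in $t$ and insert either \Cref{ass:U_bd_2nd_der} (giving the $\const^U_2$ term in (i)) or \Cref{ass:U_bd_3rd_der} (giving the $\const^U_3$ and $|\alpha|$ terms in (ii), with the improvement when $\alpha>0$). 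Since these estimates are uniform in the dimension $d$, the resulting Lipschitz bounds are dimension-free.

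In more detail: the gradient bound should come from a synchronous/reflection coupling of two copies of the Langevin diffusion started at $x$ and $\hat x$, weighted by the Girsanov factor $e^{-W}$; the weak convexity profile $\kappa_U$ yields a concave Lyapunov distance function $f$ (the Eberle construction, \cite{eberle2016ReflectionCouplingsContraction}) under which $\bbE[f(|X^x_t - X^{\hat x}_t|)]$ contracts at rate determined by $\lambda_{\kappa_U}$, while $C_{\kappa_U}\in(0,1]$ measures the distortion between $f$ and the Euclidean distance — this is the content of \eqref{def:lambda_kappa_C_kappa}. The tilt by $W$, being $\const^W_1$-Lipschitz, perturbs the drift seen by the coupling by at most $\const^W_1$ in a way that effectively replaces $\kappa_U$ by $\bar\kappa(r) = \kappa_U(r) - 4\const^W_1/(C_{\kappa_U} r)$; this is why $\lambda_{\bar\kappa}, C_{\bar\kappa}$ appear. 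For the Hessian bound one differentiates once more: $\nabla^2 V_t$ is expressed through first and second variations of the flow plus a quadratic-in-gradient term. The second variation is where either $\const^U_2$ (bounding $\|\nabla^2 U\|$ directly, so one never needs $\nabla^3 U$) or $\const^U_3$ (Lipschitz constant of $\nabla^2 U$, combined with the $\const^U_2$-free term $|\alpha|$ from uniform-from-below convexity) enters, and reflection coupling again supplies the exponential-in-$t$ decay with the same rates. Putting $\lambda(t) \lesssim (\text{const})\, e^{-c t}$ and integrating produces the factor $1/\lambda_{\bar\kappa}$, while the time-zero contribution of the Hessian of $W$ — being only Lipschitz, hence contributing a square-root-of-time singularity that is integrable — produces the $\sqrt{\pi\lambda_{\bar\kappa}}$ and $\sqrt{\pi e}$ denominators.

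The main obstacle, and the technically delicate step, is step (3)–(5) carried out \emph{simultaneously for the second variation}: one must propagate a reflection coupling estimate not just for $X^x_t - X^{\hat x}_t$ but for the difference of the first-order sensitivity processes $\partial_x X^x_t - \partial_x X^{\hat x}_t$, which solve a linearized SDE with a random, only-measurable coefficient ($\nabla^2 U(X_t)$), and to do so under the mere integrability condition $\int_0^1 r(\kappa_U(r))^-\,\De r<\infty$ rather than a global convexity bound. Keeping all the constants explicit through the Eberle-type construction of the metric $f$, through the Girsanov tilt producing $\bar\kappa$, and through the two-variation bookkeeping — and checking that the $t\to 0$ singularity from $\nabla^2 W$ is exactly of integrable order $t^{-1/2}$ — is what yields the precise exponents displayed in (i) and (ii). The improvement \eqref{eq:our_bound_vs_Max} when $\alpha>0$ should follow by using the genuine uniform lower bound $\nabla^2 U \succeq \alpha$ to sharpen both the decay rate and the short-time control, replacing the crude $|\alpha|$-term by the better $\alpha$-dependent denominators.
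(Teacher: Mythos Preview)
Your high-level reduction to a two-sided Hessian bound on $V_t$ followed by Lemma~\ref{lem:Est_Lip_heat_flow} is exactly the paper's strategy, and your identification of the $t^{-1/2}$ short-time behaviour (from the merely Lipschitz terminal datum $W$, entering through $\bbP(X_t\neq\hat X_t)$) together with exponential long-time decay is correct. However, the mechanism you propose for the Hessian bound --- differentiating the flow and propagating a coupling estimate on the sensitivity processes $\partial_x X^x_t$ --- is not what the paper does and, as stated, has a gap.

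The paper works instead with the \emph{backward} adjoint variable from the Pontryagin system (\Cref{prop:Control_properties}\ref{item:Pontryagin}): along the optimally controlled trajectory one sets $Y_s=\nabla\varphi_s(X_s)$, which solves
\[
\De Y_s = \nabla^2 U(X_s)\,Y_s\,\De s + \sqrt{2}\,Z_s\,\De B_s,\qquad Y_T=\nabla W(X_T).
\]
One first proves the gradient bound $|Y_s|\le C_{\kappa_U}^{-1}\const^W_1 e^{-\lambda_{\kappa_U}(T-s)}$ (\Cref{prop:Lip_pert_grad_bound}) by a controlled-coupling comparison of \emph{values} $\varphi_t(x)-\varphi_t(\hat x)$, not of flow derivatives. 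Then, coupling two optimal trajectories $(X,\hat X)$ by reflection and applying It\^o to $|Y_s-\hat Y_s|$, the drift term $\nabla^2 U(X_s)Y_s-\nabla^2 U(\hat X_s)\hat Y_s$ is estimated either by $\const^U_2(|Y_s|+|\hat Y_s|)\IND_{\{X_s\neq\hat X_s\}}$ under \Cref{ass:U_bd_2nd_der}, or by splitting off $\const^U_3|X_s-\hat X_s|\,|Y_s|$ and a term $\alpha|Y_s-\hat Y_s|$ under \Cref{ass:U_bd_3rd_der}. The a~priori decay of $|Y_s|$ is precisely what makes these estimates integrable in $s$ and produces the constants in the statement; this is the step your plan is missing.

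By contrast, the forward tangent process $J_s=\partial_x X^x_s$ solves a linear ODE $\De J_s=-\nabla^2 U(X_s)J_s\,\De s$ with random coefficient: reflection coupling does not act on it (there is no noise to reflect), and under \Cref{ass:U_bd_2nd_der} alone one only has $|J_s|\le e^{\const^U_2 s}$, which \emph{grows}. Representing $\nabla^2 V_t$ through second variations of the flow would moreover bring in $\nabla^3 U$, contradicting your claim that under \Cref{ass:U_bd_2nd_der} ``one never needs $\nabla^3 U$''. The backward representation sidesteps both issues at once: $Y$ is bounded a~priori by the gradient estimate, and its evolution equation involves only $\nabla^2 U$.
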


\begin{remark}
The last bound on the Lipschitz constant \eqref{eq:our_bound_vs_Max} is established under the same hypothesis as in the Euclidean setting in \cite{fathi2024TransportationLogLipschitzPerturbations}. However, our bound is in general larger compared to their bound
\begin{equation}
    \exp\left(10 \const^W_1\left(\frac{1}{\sqrt{\alpha}}+ \frac{\const^W_1}{\alpha} + \frac{\const^U_3}{\alpha^2} \right) \right).
\end{equation}
This can be seen by computing explicitly from \eqref{def:lambda_kappa_C_kappa}

\begin{equation}
    C_{\bar\kappa} = \frac{1}{2} \exp\left(-2\frac{(\const^W_1)^2}{\alpha} \right),
\end{equation}
yielding a double exponential dependence on $\const^W_1$ in our bound. Together with $\lambda_{\bar\kappa} < \alpha$ one observes that our bound cannot be better in general.

By inspecting our proof we observe that this comes from the fact that $\bar\kappa$ is a crude lower bound on the convexity profile of the potential fields $(V_t)_{t\geq0}$ giving rise to the Langevin transport map. One could improve the bound $\bar\kappa$ by using our Hessian estimates together with an early stopping argument from optimal control; however this is of a high technical cost, and we prefer leave it as it is for readability of the main argument.
\end{remark}

\subsection{Control interpretation}\label{sec:contr_interpretation}

In order to prove \Cref{thm:heat_flow_Lip_Lip_pert}, we will use the fact that $V_t$ is related to a stochastic optimal control problem in the following sense.

For a fixed time horizon $T>0$ set $\varphi_t = V_{T-t} = -\log P_{T-t}e^{-W}$. Then differentiating $t \mapsto \varphi_t$ we see that $\varphi$ solves the Hamilton-Jacobi-Bellman (HJB) equation
\begin{equation}\label{eq:HJB_log_backward_density}
\partial_t \varphi_t + \Delta \varphi_t -\nabla U\cdot \nabla \varphi_t -|\nabla\varphi_t|^2=0, \quad \varphi_T=W.
\end{equation}
It is well known that under mild regularity hypothesis (which are in particular implied by \Cref{ass:U_and_W_Lip}-\Cref{ass:U_bd_2nd_der} or \Cref{ass:U_and_W_Lip}-\Cref{ass:U_bd_3rd_der}, we make this rigorous in the proposition below), there exists a unique classical solution to \cref{eq:HJB_log_backward_density} and it is given by the value function of the following control problem
\begin{equation}\label{eq:stoch_cont_prob}
\begin{split}
\varphi_t(x)=\inf_{u} \bbE&[\int_t^T\frac14 |u_s|^2 d s+ W(X^{x,u}_T)]\\
&\begin{cases}
\De X^{x,u}_s = \big(-\nabla U(X^{x,u}_s) +u_s\big)ds + \sqrt{2}\De B_s, \\
X^{x,u}_0=x.
\end{cases} 
\end{split}
\end{equation}
where an admissible control $u_s$ is a square integrable process adapted to the filtration generated by $(B_s)_{s\in[0,T]}$. 

For convenience, we summarize all the properties which we need on the control problem in the following proposition.

\begin{proposition}\label{prop:Control_properties}
Assume that \Cref{ass:U_and_W_Lip} holds.
\begin{enumerate}[label=(\roman*)]
\item\label{item:fin_dim_HJB} The value function  $(t,x)\mapsto \varphi_{t}(x)$ defined by \eqref{eq:stoch_cont_prob} is the unique classical solution in the set $C([0,T]\times \bbR^d) \cap C^{1,2}_{\mathrm{loc}}((0,T) \times \bbR^d)$ of the HJB equation \eqref{eq:HJB_log_backward_density}.
\item\label{item:fin_dim_OptContr} The map 
$
(s,x) \mapsto -2\nabla\varphi_s(x)\eqsp, 
$
is an optimal Markov control policy in the sense that if for any given $(t,x) \in\bbR_+\times \bbR^{d}$ we define the process $(X_s)_{s\in[t,T]}$ as the unique strong solution of the SDE
\begin{equation}\label{eq:fin_dim_OptDyn}
    \De X_s =[-\nabla U(X_s) -2\nabla\varphi_s(X_{s})] \De s  + \sqrt{2}\De B_s \eqsp,\quad X_t=x
\end{equation}
and we set
$
    u_{s}:= -2\nabla\varphi_s(X_{s})\eqsp, 
$
then $u$ is an optimal control, i.e. a minimizer in \eqref{eq:stoch_cont_prob}
and the process $(X_s)_{s\in[t,T]}$ coincides with $(X^{u}_s)_{s\in[t,T]}$ a.s..
\item\label{item:Pontryagin} If in addition \Cref{ass:U_bd_2nd_der} or \Cref{ass:U_bd_3rd_der} holds, then  setting $Y^x_s=\nabla\varphi_s(X^x_s)$, $Z^x_s=\nabla^2\varphi_s(X^x_s)$ we obtain that $(X^x_\cdot,Y^x_\cdot,Z^x_\cdot)$ form a solution of the Pontryagin system
\begin{equation}\label{eq:Pontryagin_sys}
\begin{split}
\De X^x_s &=(-\nabla U(X^x_s)  -2 Y^x_s )\De s+ \sqrt{2}\De B_s, \quad X^x_0=x\\
\De Y^x_s &= \nabla^2 U(X^x_s) Y^x_s\De s + \sqrt{2}Z^x_s  \De B_s, \quad Y^x_T=\nabla W(X^x_T)
\end{split}
\end{equation}

\end{enumerate}
    
\end{proposition}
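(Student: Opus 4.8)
\emph{Proof strategy.} The three assertions are the standard dynamic-programming and stochastic-Pontryagin facts attached to the control problem \eqref{eq:stoch_cont_prob}, and the plan is to read them off the explicit representation $\varphi_t=-\log P_{T-t}e^{-W}$ together with interior parabolic regularity and verification arguments, following the PDE treatments in \cite{fathi2024TransportationLogLipschitzPerturbations,mikulincer2023LipschitzPropertiesTransportation}. For the first assertion, I would first check that $h_t:=P_{T-t}e^{-W}$ is finite, strictly positive and belongs to $C^{1,2}_{\mathrm{loc}}((0,T)\times\bbR^d)$, with $h_t\to e^{-W}$ locally uniformly as $t\uparrow T$: since $W$ is $\const^W_1$-Lipschitz, $e^{-W}$ has at most exponential growth, and the confinement in \Cref{ass:U_and_W_Lip} ($\liminf_{r\to\infty}\kappa_{U}(r)>0$) gives non-explosion of \eqref{eq:Langevin_SDE} and exponential-moment bounds $\bbE[e^{c|X^x_{T-t}|}]\le e^{c|x|+C(c,T)}$, so $h_t(x)<\infty$ with at most exponential growth; interior parabolic estimates for $\partial_t h_t+\Delta h_t-\nabla U\cdot\nabla h_t=0$ (whose coefficient $\nabla U$ is locally Lipschitz since $U\in C^2$) then give $h\in C^{1,2}_{\mathrm{loc}}$, and $h>0$ by the strong minimum principle. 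Writing $\varphi_t=-\log h_t$ and inserting $\partial_t h_t=-(\Delta h_t-\nabla U\cdot\nabla h_t)$ into the formulas for $\partial_t\varphi_t,\nabla\varphi_t,\nabla^2\varphi_t$ in terms of $h_t$ shows by a one-line computation that $\varphi\in C([0,T]\times\bbR^d)\cap C^{1,2}_{\mathrm{loc}}$ solves \eqref{eq:HJB_log_backward_density} with $\varphi_T=W$. To identify $\varphi$ with the value function and obtain uniqueness, I would run a verification argument: for admissible $u$ of finite cost, Itô's formula for $s\mapsto\varphi_s(X^{x,u}_s)$ (localised along exit times of large balls, using the linear lower bound $\varphi_t(x)\ge-\const^W_1|x|-C$ inherited from the exponential upper bound on $h_t$ to pass to the limit) plus the pointwise inequality $\tfrac14|u|^2+u\cdot\nabla\varphi_s\ge-|\nabla\varphi_s|^2$, equality iff $u=-2\nabla\varphi_s$, together with the HJB equation, give $\varphi_t(x)\le\bbE\big[\int_t^T\tfrac14|u_s|^2\,\De s+W(X^{x,u}_T)\big]$ with equality for the feedback control; uniqueness in the stated class then follows since any classical solution $\psi$ yields the positive solution $e^{-\psi}$ of the linear backward equation, which is unique by a parabolic minimum-principle (Widder-type) argument.

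For the second assertion, since $\varphi\in C^{1,2}_{\mathrm{loc}}$ the time-dependent drift $(s,x)\mapsto-\nabla U(x)-2\nabla\varphi_s(x)$ is locally Lipschitz in $x$ on $(0,T)$, and a Bernstein-type interior estimate for \eqref{eq:HJB_log_backward_density} bounds $|\nabla\varphi_s(x)|$ by a fixed polynomial in $|x|$ (linear under \Cref{ass:U_bd_2nd_der}); feeding this, together with $\liminf_{r\to\infty}\kappa_{U}(r)>0$, into a Lyapunov computation for $|x|^2$ along \eqref{eq:fin_dim_OptDyn} yields non-explosion, hence a unique strong solution $X$ and bounds $\sup_{s\le T}\bbE[|X_s|^2]<\infty$, which make $u_s:=-2\nabla\varphi_s(X_s)$ square-integrable, hence admissible. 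Running the verification computation along this $X$ turns the inequality into an equality, so $u$ is optimal; and $X$ is by construction a strong solution of the open-loop SDE defining $X^{u}$, so $X=X^{u}$ a.s.\ by strong uniqueness.

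For the third assertion, I would differentiate \eqref{eq:HJB_log_backward_density} once in space: $v=\nabla\varphi$ then solves the linear parabolic system $\partial_s v+\Delta v-(\nabla U+2\nabla\varphi)\cdot\nabla v=\nabla^2U\,\nabla\varphi$. Under \Cref{ass:U_bd_3rd_der} the coefficients are Hölder, so Schauder estimates bootstrap $\varphi$ to $C^{1,3}_{\mathrm{loc}}$ and Itô's formula may be applied to $s\mapsto\nabla\varphi_s(X^x_s)$ along \eqref{eq:fin_dim_OptDyn}; the drift and Laplacian terms generated by Itô cancel against the left-hand side of the differentiated equation, leaving $\De Y^x_s=\nabla^2U(X^x_s)Y^x_s\,\De s+\sqrt2\,\nabla^2\varphi_s(X^x_s)\,\De B_s$ with $Y^x_T=\nabla\varphi_T(X^x_T)=\nabla W(X^x_T)$, i.e.\ \eqref{eq:Pontryagin_sys} with $Z^x_s=\nabla^2\varphi_s(X^x_s)$. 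Under the weaker \Cref{ass:U_bd_2nd_der} one only has $\varphi\in C^{1,2}_{\mathrm{loc}}$ with $\nabla^2\varphi$ merely continuous, so I would instead approximate $U$ by smooth $U_n\to U$ with $\sup_n\|\nabla^2U_n\|_\infty\le\const^U_2$, establish \eqref{eq:Pontryagin_sys} for each $n$, and pass to the limit using stability of \eqref{eq:fin_dim_OptDyn} and the locally uniform convergences $\varphi^{(n)}\to\varphi$, $\nabla\varphi^{(n)}\to\nabla\varphi$, $\nabla^2\varphi^{(n)}\to\nabla^2\varphi$, with bounds uniform in $n$.

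The main difficulty is not a single step but the regularity-and-growth bookkeeping for $\varphi$ and $\nabla\varphi$ under only asymptotic (rather than uniform) convexity of $U$ and only locally Lipschitz $\nabla U$: the interior estimates, growth bounds, and the localisation in Itô's formula must be made quantitative enough, in particular uniformly up to the terminal time where $W$ is merely Lipschitz, to legitimise the verification argument and, for the third assertion, the application of Itô's formula to $\nabla\varphi_s(X_s)$, which is exactly what forces either the extra smoothness of \Cref{ass:U_bd_3rd_der} or the approximation argument under \Cref{ass:U_bd_2nd_der}.
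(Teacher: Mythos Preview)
The paper does not actually prove this proposition: it simply states ``For a proof under the current setting we refer to \cite[Proposition 4.1, 4.5]{cecchin2024ExponentialTurnpikePhenomenon}.'' So there is no argument in the paper to compare against; the authors treat \Cref{prop:Control_properties} as a known fact borrowed from that reference.

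Your proposal, by contrast, sketches a self-contained proof via the explicit formula $\varphi_t=-\log P_{T-t}e^{-W}$, interior parabolic regularity for the linear equation satisfied by $h_t=P_{T-t}e^{-W}$, a verification argument for optimality, and Itô's formula applied to $\nabla\varphi_s(X^x_s)$ for the Pontryagin system. This is the standard route and is essentially what one would expect the cited reference to carry out (indeed the authors of the present paper and of \cite{cecchin2024ExponentialTurnpikePhenomenon} overlap). The outline is sound: the growth and positivity of $h_t$ under \Cref{ass:U_and_W_Lip}, the verification inequality with equality at $u=-2\nabla\varphi$, and the derivation of \eqref{eq:Pontryagin_sys} by differentiating the HJB equation and applying Itô are all correct in spirit. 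Your identification of the delicate point---bootstrapping enough regularity on $\varphi$ under only \Cref{ass:U_bd_2nd_der} to justify Itô on $\nabla\varphi$, handled by smoothing $U$---is also on target. In short, you have supplied what the paper deliberately outsourced; there is no discrepancy in approach, only in level of detail.
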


For a proof under the current setting we refer to \cite[Proposition 4.1, 4.5]{cecchin2024ExponentialTurnpikePhenomenon}.

\section{Coupling by reflection}\label{sec:coup_refl}
The purpose of this section to give an overview of the reflection coupling introduced in \cite{lindvall1986CouplingMultidimensionalDiffusions} whose contractive properties have been shown in \cite{eberle2016ReflectionCouplingsContraction}, applied to controlled processes, as in \cite{conforti2023CouplingReflectionControlled, cecchin2024ExponentialTurnpikePhenomenon}.

Given a family of potentials $(V_s)_{s\geq0}$ and a bounded measurable family of  drift fields $(\alpha_s)_{s\geq0}$, $\alpha_s: \bbR^d \rightarrow \bbR^d$, we seek to study the ergodic behaviour of the SDE
\begin{equation}\label{eq:SDE_prototype}
    \De X_s = \left(-\nabla V_s(X_s) + \alpha_s(X_s)\right) \De s + \sqrt{2} \De B_s
\end{equation}
by means of coupling by reflection.
\begin{remark}
The whole construction and ergodic properties reported here do not require the drift coming from a potential and work for a general drift fields $\beta_s$ satisfying the appropriate conditions, as described in \cite{cecchin2024ExponentialTurnpikePhenomenon}. We decided to describe everything for SDEs coming from potentials since here we are interested on their ergodic behaviour coming from the weak convexity profile defined below. For the proofs we refer to \cite{cecchin2024ExponentialTurnpikePhenomenon}.
\end{remark}
To this end, we define the weak convexity profile of $V_s$ and $\cV :=(V_s)_{s\geq0}$ as
\begin{equation}\label{eq:def_fun_kappa_inhom}
\kappa_{V_s}(r) = \inf\left\{\frac{\ip{\nabla V_s(x)-\nabla V_s(\hat{x})}{x-\hat{x}}}{|x-\hat{x}|^2} : |x-\hat{x}|=r \right\}\eqsp, \quad 
\kappa_{\cV}(r) =\inf \{ \kappa_{V_s}(r): s \in \bbR^+  \}, \quad r>0 \eqsp.
\end{equation}
We assume the following on the drift.
\begin{assumption}\label{ass:coupling}
\begin{enumerate}[wide, labelwidth=!, labelindent=0pt,label=(\roman*)]
\item  For every $s\geq 0$ the vector field $x \mapsto -\nabla V_s(x)$ is continuous and uniformly one-sided Lipschitz continuous, i.e. there exists $\rho \in \bbR$ such that for all $x, \hat{x} \in \bbR^d$
\begin{equation}
    \ip{\nabla V_s(x)-\nabla V_s(\hat{x})}{x-\hat{x}} \geq \rho |x-\hat{x}|^2.
\end{equation}

In addition, for any compact set $\msc \subset \bbR^d$,  $t \mapsto \sup_{x \in\msc} |\nabla V_t(x)| \leq \upphi_{\msc}(t)$ is locally integrable.
\item\label{ass:_kappa} We suppose that there exists $\bar{\kappa} \in \msk$ such that for all $r> 0$ $\kappa_{\cV}(r) \geq \bar{\kappa}(r)$, where
\begin{equation}
  \label{eq:def_msk}
\msk = \{ \kappa \in C( (0,+\infty), \bbR) \,: \,  \int_{0}^1 r (\kappa(r))^- \, \De r < +\infty \, , \, \liminf_{r \to +\infty} \kappa(r) > 0\} \eqsp,
\end{equation}
\end{enumerate}
\end{assumption}

In particular under \Cref{ass:coupling}, there exists unique strong
solutions for \eqref{eq:SDE_prototype} by
\cite[Corollary 2.6]{gyongy1996ExistenceStrongSolutions}. 
In the sequel, the contractive properties of the couplings we shall consider will be expressed either in the total variation distance or in some modification of the $1$-Wasserstein distance $W_1$, which we borrow from \cite{eberle2016ReflectionCouplingsContraction}.

For $\kappa\in \msk$ consider
\begin{equation}\label{eq:aux_fun_eberle_2}
   R_0=\inf\{R\geq 0: \inf_{r \geq R}\kappa(r)\geq 0 \} \eqsp, \quad  R_1=\inf\defEns{ R\geq R_0: \inf_{r \geq R} \{ \kappa(r)R(R-R_0) \} \geq 8 } \eqsp,
  \end{equation}
and define
\begin{equation}\label{eq:def_Eberle_func}
\begin{aligned}
&\phi_{\kappa}(r)=\exp\Big(-\frac{1}{4}\int_0^r s(\kappa(s))^-\De s\Big) , \quad
 \Phi_{\kappa}(r)=\int_0^r\phi_{\kappa}(s)\De s, \\
 &g_{\kappa}(r)=1- \frac{\int_0^{r \wedge R_1}\Phi_{\kappa}(s)/\phi_{\kappa}(s) \De s}{2\mathcal{Z}_{\kappa}} \eqsp,
\end{aligned}
\end{equation}
where $(\kappa(s))^- = \max\{-\kappa(s),0\}$, and $\mathcal{Z}_{\kappa} = \int_0^{R_1}\Phi_{\kappa}(s)/\phi_{\kappa}(s) \De s$.
With this notation at hand we are ready to construct the afore-mentioned modifications of the Wasserstein distance to achieve contraction properties.
\begin{definition}\label{def:twisted_metric}
For $\kappa \in \msk$ define $f_\kappa : \bbR_+ \to \bbR_+$  as
\begin{equation}
  \label{eq:def_f_kappa}
f_{\kappa}(r) =\int_0^r\phi_{\kappa}(s)g_{\kappa}(s)\De s \eqsp,  \quad r \geq 0  \eqsp.
\end{equation}
For $\kappa \in \msk$, we define for any $\mu,\hat{\mu}\in\cP_1(\bbR^d)$
\begin{equation}\label{eq:def_W_f}
W_{f_\kappa}(\mu,\hat{\mu}) = \inf_{\pi\in\Pi(\mu,\hat{\mu})} \int f_\kappa(|x-\hat{x}|) \pi(\De x \De \hat{x}),
\end{equation}
where $\Pi(\mu,\hat\mu)$ is the set of couplings of $\mu$ and $\hat\mu$
\begin{equation}
\Pi(\mu,\hat\mu)=\{ \pi\in\cP(\bbR^{2d}): \pi(A\times\bbR^d)=\mu(A),\,\,\pi(\bbR^d\times A)=\hat\mu(A) \quad \forall A\subseteq \mathcal{B}(\bbR^d) \}.
\end{equation}
\end{definition}
 
 \begin{proposition}\label{prop:Lyapunov_funct_coup_by_ref}
 For $\kappa\in \msk$ we define  $\lambda_{\kappa}>0$ and $C_{\kappa} \geq 0$ as
\begin{equation}
  \label{def:lambda_kappa_C_kappa}
  \lambda_{\kappa}=\frac{2}{\int_0^{R_1}\Phi_{\kappa}(s)/\phi_{\kappa}(s) \De s} \eqsp, \quad
   C_{\kappa}= \frac{\phi_{\kappa}(R_0)}{2} \eqsp,
\end{equation}
where $\Phi_{\kappa}$ and $\phi_{\kappa}$ are defined in \eqref{eq:def_Eberle_func}.
Then the following holds
\begin{enumerate}[label=(\roman*)]
\item\label{item_1:Lyapunov_funct_coup_by_ref} $f_{\kappa}$ is concave, continuously differentiable such that $f'_{\kappa}$ is absolutely continuous. Furthermore, it is equivalent to the identity $r \mapsto r$ on $\bbR_+$:
\begin{equation}\label{eq:Lyapunov_funct_coup_by_ref_2}
  C_{\kappa}\, r\leq f_{\kappa}(r) \leq r\eqsp, \quad C_{\kappa}\leq f'_{\kappa}(r) \leq 1\eqsp,
\end{equation}
\item\label{item_2:Lyapunov_funct_coup_by_ref}  The differential inequality
\begin{equation}\label{eq:Eberle_funct_2}
4 f''_{\kappa}(r)-r\kappa(r)f'_{\kappa}(r)\leq -\lambda_{\kappa} f_{\kappa}(r) \eqsp.
\end{equation}
holds for all $r>0$.
\item\label{item_3:Lyapunov_funct_coup_by_ref}  $\lambda_{\kappa},C_{\kappa}$ are monotone in the following sense: if $\kappa,\kappa'\in K$ are such that 
 \begin{equation}
\kappa(r)\geq\kappa'(r) \quad \forall r>0, \qquad \mbox{ then } \quad 
 \lambda_{\kappa}\geq\lambda_{\kappa'}, \quad C_{\kappa}\geq C_{\kappa'}.
 \end{equation}
 \end{enumerate}
 \end{proposition}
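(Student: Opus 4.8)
The plan is to follow the construction of Eberle \cite{eberle2016ReflectionCouplingsContraction}: all three statements are elementary consequences of the explicit formulas \eqref{eq:def_Eberle_func}--\eqref{eq:def_f_kappa} once one records the shape of the building blocks. First I would note that, since $(\kappa(s))^{-}\ge 0$, the function $\phi_{\kappa}$ is continuous, non-increasing, with $\phi_{\kappa}(0)=1$; as $(\kappa(s))^{-}=0$ for $s\ge R_{0}$ it is constant equal to $\phi_{\kappa}(R_{0})$ on $[R_{0},\infty)$, so $\phi_{\kappa}(R_{0})\le\phi_{\kappa}(r)\le 1$ for all $r$. The hypotheses $\int_{0}^{1}s(\kappa(s))^{-}\De s<\infty$ and $\kappa\in C((0,\infty),\bbR)$ give $\phi_{\kappa}(R_{0})>0$ and $\phi_{\kappa}$ absolutely continuous on bounded intervals with $\phi_{\kappa}'(r)=-\tfrac14 r(\kappa(r))^{-}\phi_{\kappa}(r)$ a.e., while $\liminf_{r\to\infty}\kappa(r)>0$ makes the radii $R_{0},R_{1}$ of \eqref{eq:aux_fun_eberle_2} finite. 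The function $g_{\kappa}$ is non-increasing (its integrand $\Phi_{\kappa}/\phi_{\kappa}$ is non-negative), Lipschitz, equal to $1$ at $0$ and to $\tfrac12$ on $[R_{1},\infty)$, so $\tfrac12\le g_{\kappa}\le 1$. Hence $f_{\kappa}'=\phi_{\kappa}g_{\kappa}$ is continuous (so $f_{\kappa}\in C^{1}$), locally absolutely continuous (product of a locally absolutely continuous function and a Lipschitz one), satisfies $C_{\kappa}=\tfrac12\phi_{\kappa}(R_{0})\le f_{\kappa}'\le 1$, and $f_{\kappa}''=\phi_{\kappa}'g_{\kappa}+\phi_{\kappa}g_{\kappa}'\le 0$ a.e.\ since every factor has the appropriate sign; integrating yields $C_{\kappa}r\le f_{\kappa}(r)\le r$ and concavity, which is \ref{item_1:Lyapunov_funct_coup_by_ref}. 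Positivity $\lambda_{\kappa}>0$ follows from $0<\mathcal Z_{\kappa}<\infty$, which holds because $\Phi_{\kappa}/\phi_{\kappa}$ is bounded on $[0,R_{1}]$.

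For the differential inequality \eqref{eq:Eberle_funct_2} I would start from the a.e.\ identity obtained by inserting $\phi_{\kappa}'=-\tfrac14 r(\kappa(r))^{-}\phi_{\kappa}$ and using $(\kappa)^{-}+\kappa=(\kappa)^{+}$:
\[
4f_{\kappa}''(r)-r\kappa(r)f_{\kappa}'(r)=-r(\kappa(r))^{+}\phi_{\kappa}(r)g_{\kappa}(r)+4\phi_{\kappa}(r)g_{\kappa}'(r).
\]
The first summand is $\le 0$. For $r<R_{1}$ one has $g_{\kappa}'(r)=-\Phi_{\kappa}(r)/(2\mathcal Z_{\kappa}\phi_{\kappa}(r))$, so the second summand equals $-\lambda_{\kappa}\Phi_{\kappa}(r)\le-\lambda_{\kappa}f_{\kappa}(r)$ because $g_{\kappa}\le 1$ gives $f_{\kappa}\le\Phi_{\kappa}$; this settles $(0,R_{1})$. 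For $r>R_{1}$ we have $g_{\kappa}'=\phi_{\kappa}'=0$, hence $4f_{\kappa}''-r\kappa f_{\kappa}'=-r\kappa(r)C_{\kappa}$, and the point is to check $r\kappa(r)C_{\kappa}\ge\lambda_{\kappa}f_{\kappa}(r)$. Here I would combine three facts: concavity with $f_{\kappa}(0)=0$ makes $f_{\kappa}(r)/r$ non-increasing, so $f_{\kappa}(r)\le r\,\Phi_{\kappa}(R_{1})/R_{1}$; the definition of $R_{1}$ gives $\kappa(r)\ge 8/(R_{1}(R_{1}-R_{0}))$ for $r\ge R_{1}$; and, using $\phi_{\kappa}\equiv\phi_{\kappa}(R_{0})$ on $[R_{0},R_{1}]$ so that $\Phi_{\kappa}(s)/\phi_{\kappa}(s)=a+(s-R_{0})$ there with $a:=\Phi_{\kappa}(R_{0})/\phi_{\kappa}(R_{0})\ge 0$, one gets $\mathcal Z_{\kappa}\ge(R_{1}-R_{0})\big(a+(R_{1}-R_{0})/2\big)$ and $\Phi_{\kappa}(R_{1})=\phi_{\kappa}(R_{0})(a+R_{1}-R_{0})$. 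Substituting and writing $b:=R_{1}-R_{0}$, the needed inequality reduces to $2(a+b)\le 4(a+b/2)$, i.e.\ $a\ge 0$, which is true; the single point $r=R_{1}$ is immaterial since \eqref{eq:Eberle_funct_2} is an a.e.\ statement.

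For the monotonicity \ref{item_3:Lyapunov_funct_coup_by_ref}, assume $\kappa\ge\kappa'$ pointwise. Then $(\kappa)^{-}\le(\kappa')^{-}$, hence $\phi_{\kappa}\ge\phi_{\kappa'}$ everywhere; moreover $R_{0}(\kappa)\le R_{0}(\kappa')$, and since $\phi_{\kappa}$ is non-increasing, $C_{\kappa}=\tfrac12\phi_{\kappa}(R_{0}(\kappa))\ge\tfrac12\phi_{\kappa}(R_{0}(\kappa'))\ge\tfrac12\phi_{\kappa'}(R_{0}(\kappa'))=C_{\kappa'}$. For $\lambda_{\kappa}\ge\lambda_{\kappa'}$ it suffices that $\mathcal Z_{\kappa}\le\mathcal Z_{\kappa'}$. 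Writing $\Phi_{\kappa}(s)/\phi_{\kappa}(s)=\int_{0}^{s}\exp\big(\tfrac14\int_{u}^{s}v(\kappa(v))^{-}\De v\big)\De u$ and using $(\kappa)^{-}\le(\kappa')^{-}$ shows this integrand is, for every $s$, dominated by the corresponding one for $\kappa'$; since the integrand is non-negative and $R_{1}(\kappa)\le R_{1}(\kappa')$ — which I would verify by checking that the threshold condition defining $R_{1}(\kappa')$ is inherited by $\kappa$, using $\kappa\ge\kappa'\ge 0$ on $[R_{1}(\kappa'),\infty)$ and $R_{0}(\kappa)\le R_{0}(\kappa')$ — we obtain $\mathcal Z_{\kappa}\le\mathcal Z_{\kappa'}$, hence $\lambda_{\kappa}\ge\lambda_{\kappa'}$.

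The \emph{main obstacle} is the tail range $r>R_{1}$ in part \ref{item_2:Lyapunov_funct_coup_by_ref}: the crude bound $f_{\kappa}(r)\le r$ is too lossy there, and one genuinely needs the sharper ratio estimate $f_{\kappa}(r)/r\le\Phi_{\kappa}(R_{1})/R_{1}$ together with the lower bound on $\mathcal Z_{\kappa}$ that exploits the constancy of $\phi_{\kappa}$ on $[R_{0},R_{1}]$; it is precisely the calibration of the constant $8$ in the definition of $R_{1}$ against the factor $\tfrac14$ in the exponent of $\phi_{\kappa}$ that makes these two estimates fit and hence makes $\lambda_{\kappa}$ admissible. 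The remaining parts are routine manipulations of monotone functions.
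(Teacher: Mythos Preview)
Your proof is correct and complete; the paper itself does not prove this proposition but simply refers the reader to \cite{cecchin2024ExponentialTurnpikePhenomenon} (which in turn rests on Eberle's construction in \cite{eberle2016ReflectionCouplingsContraction}). Since you explicitly follow Eberle's approach, your argument is essentially what the cited reference does, so there is no divergence in method---you have just written out what the paper takes as known. The only point worth noting is that your treatment of the tail region $r>R_{1}$ in \ref{item_2:Lyapunov_funct_coup_by_ref}, with the ratio bound $f_{\kappa}(r)/r\le\Phi_{\kappa}(R_{1})/R_{1}$ and the lower bound on $\mathcal Z_{\kappa}$, is exactly the ``delicate'' step in Eberle's original proof, and you have handled it correctly; your observation that the constant $8$ in the definition of $R_{1}$ is calibrated precisely to make the inequality $a\ge 0$ close the argument is the right explanation.
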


With this at hand let us recall the construction of controlled coupling by reflection introduced in \cite{conforti2023CouplingReflectionControlled} and extended in \cite{cecchin2024ExponentialTurnpikePhenomenon} and its contraction properties.
For an $\cF_0$-measurable random variable $\zeta=(\zeta_1,\zeta_2) \in \bbR^{2d}$, consider for $t < T_0 = \inf\{s \geq 0 \,: \, X_s \neq \hat{X}_s \}$,
\begin{equation}\label{eq:coup_by_ref_contraction_2}
  \begin{aligned}
\De X_t& = \{ - \nabla V_t(X_t) + {\alpha}_t(X_t) \}\De t+\sqrt{2} \De B^1_t , \eqsp X_0 = \zeta_1 \eqsp, \\
\De \hat{X}_t& =\{ - \nabla V_t(\hat{X}_t) +{\alpha}_t(X_t) \}\De t+ \sqrt{2} \De \hat{B}^1_1 , \eqsp \hat{X}_0 = \zeta_2 \eqsp,
\end{aligned}
\end{equation}
where
\begin{equation}
  \label{eq:def_mirror_brownian}
 \De\hat{B}^1_t= (\Id-2 \,e_t \cdot e^{\transpose}_t)\cdot \De B^1_t \eqsp, \quad e_t = (X_t-\hat{X}_t)/|X_t-\hat{X}_t| \eqsp,
\end{equation}
and for $t \geq T_0$,
\begin{equation}\label{eq:coup_by_ref_contraction_2_2}
  \De X_t = - \nabla V_t(X_t) + \alpha_t(X_t)\De t+\sqrt{2}\De B^1_t  \eqsp, \quad \hat{X}_t = X_t \eqsp.
\end{equation} 
\begin{proposition}
  Assume \Cref{ass:coupling} and that
  $\alpha$ is bounded and locally Lipschitz continuous. 
  Then the system \eqref{eq:coup_by_ref_contraction_2}-\eqref{eq:coup_by_ref_contraction_2_2} admits a strong solution. 
\end{proposition}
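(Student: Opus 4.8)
The statement to prove is the well-posedness proposition: under Assumption~\ref{ass:coupling} and assuming $\alpha$ bounded and locally Lipschitz, the reflection-coupled system \eqref{eq:coup_by_ref_contraction_2}--\eqref{eq:coup_by_ref_contraction_2_2} admits a strong solution. My plan is to construct the solution in three stages, corresponding to the pre-coupling regime $t<T_0$, the coupling time $T_0$, and the post-coupling regime $t\ge T_0$, and to handle the singularity of $e_t$ at the diagonal by the standard stopping-time / localization argument going back to Lindvall--Rogers and Eberle.

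First I would treat the diagonal case: if $\zeta_1=\zeta_2$ a.s.\ (on the relevant event) then $T_0=0$ and the system reduces to \eqref{eq:coup_by_ref_contraction_2_2}, a single SDE with drift $-\nabla V_t(x)+\alpha_t(x)$ which is continuous, one-sided Lipschitz (Assumption~\ref{ass:coupling}(i)) with locally integrable spatial sup, and $\alpha$ locally Lipschitz and bounded; hence \cite[Corollary 2.6]{gyongy1996ExistenceStrongSolutions}, already invoked in the excerpt for \eqref{eq:SDE_prototype}, gives a unique strong solution. Next, on the event $\{\zeta_1\neq\zeta_2\}$, I would run the two-component system before the coupling time. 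Here the key observation is that the \emph{difference} process and the pair are driven by coefficients that are locally Lipschitz away from $\{X=\hat X\}$: the map $e_t=(X_t-\hat X_t)/|X_t-\hat X_t|$ and the reflection operator $\Id-2e_te_t^\top$ are smooth on $\{x\neq\hat x\}$, and the drifts $-\nabla V_t(\cdot)+\alpha_t(X_t)$ are continuous, one-sided Lipschitz and locally bounded. So for each $k$ I would define the stopping time $\tau_k=\inf\{t: |X_t-\hat X_t|\le 1/k \text{ or } |X_t|\vee|\hat X_t|\ge k\}$ and solve the SDE on $[0,\tau_k]$ by a standard Picard iteration (or again by a localized version of Gyöngy--Krylov), obtaining a consistent family; letting $k\to\infty$ produces a strong solution up to $T_0:=\lim_k\tau_k = \inf\{t:X_t=\hat X_t\}$, using that non-explosion of $|X|,|\hat X|$ on finite horizons follows from the one-sided Lipschitz bound and boundedness of $\alpha$ via a Lyapunov/Grönwall estimate on $\bbE|X_t|^2$.

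Then I would verify that $T_0$ is the true coupling time and splice: at $t=T_0$ we have $X_{T_0}=\hat X_{T_0}$ by continuity, so we restart the one-dimensional dynamics \eqref{eq:coup_by_ref_contraction_2_2} from that common value (using the strong Markov property and the well-posedness of that single SDE from the diagonal case), and set $\hat X_t=X_t$ for $t\ge T_0$. Concatenating the two pieces at $T_0$ yields a process on all of $[0,\infty)$ adapted to the augmented filtration generated by $B^1$ and $\zeta$; one checks $\hat B^1$ defined by \eqref{eq:def_mirror_brownian} is itself a Brownian motion (Lévy characterization, since $\Id-2e_te_t^\top$ is orthogonal) so the pair solves the stated system. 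I would remark that, on finite horizons, $T_0$ is hit in finite time or not at all, and in either case the construction covers $[0,\infty)$; uniqueness in law / strong uniqueness before $T_0$ follows from pathwise uniqueness of the localized SDEs, and after $T_0$ from that of the single SDE, so the solution is in fact unique, though only existence is claimed. For a self-contained treatment one simply cites \cite{cecchin2024ExponentialTurnpikePhenomenon}, as the remark preceding the proposition already does; the proof there is exactly this localization argument.

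The main obstacle is the behaviour of the coefficients at the diagonal $\{X=\hat X\}$: the reflection direction $e_t$ is discontinuous there, so one cannot apply a global existence theorem directly and must argue that the singularity is never reached \emph{in the interior} of the pre-coupling phase — precisely, that $(X_t,\hat X_t)$ stays in the open set $\{x\neq \hat x\}$ for $t<T_0$ by definition of $T_0$, while simultaneously the difference $|X_t-\hat X_t|$ does not blow up (which is automatic since it is bounded above by a process with linear-growth-controlled, one-sided-Lipschitz drift). Making the localization-and-pasting rigorous, in particular checking that the limit as $k\to\infty$ does not lose mass or fail to be adapted, and verifying the Brownian character of $\hat B^1$ across the reflection, are the points needing care; everything else is the routine SDE machinery already set up in Assumption~\ref{ass:coupling} and the cited literature.
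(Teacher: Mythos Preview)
Your sketch is correct and is exactly the standard Lindvall--Rogers/Eberle localization-and-splicing construction that the paper defers to: the paper does not prove this proposition at all but simply refers (via the remark at the start of \Cref{sec:coup_refl}) to \cite{cecchin2024ExponentialTurnpikePhenomenon}, and your outline reproduces that argument. One cosmetic point: the paper's displayed definition of $T_0$ contains a typo (it should read $X_s=\hat X_s$, not $X_s\neq\hat X_s$), and you have correctly interpreted $T_0$ as the first meeting time.
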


\begin{remark}
By Lévy's characterization, $(\hat{B}^1_s)_{s \geq}$ is a Brownian motion. 
Therefore, if $\alpha\equiv 0$, $(X_s,\hat{X}_s)_{s\geq 0}$ is indeed a coupling of the diffusion process \eqref{eq:SDE_prototype} with initial laws given by $(\zeta_1, \zeta_2)$.
\end{remark}

Note that if $\alpha\equiv0$, \eqref{eq:coup_by_ref_contraction_2}-\eqref{eq:coup_by_ref_contraction_2_2} recovers usual coupling by reflection for non-homogeneous drifts. When $\alpha\not \equiv 0$ this is not a coupling in the classical sense because, for example, the drift of $(\hat{X}_t)_{t \geq 0}$ may depend on $(X_t)_{t \geq 0}$ and therefore $(\hat{X}_t)_{t \geq 0}$ may not even be a Markov process. However, this construction will be key to obtain uniform in time Lipschitz estimates for the value function of stochastic control problems. In this framework, $(X_t)_{t \geq 0}$ and $(u_t=\alpha_t(X_t))_{t \geq 0}$ represent  an optimal process and an optimal control for a given stochastic control problem. On the other hand, $(\hat{X}_t)_{t \geq 0}$ is an admissible process relative to the suboptimal control $(u_t)_{t \geq 0}$ for a stochastic control problem with different initial conditions.

 Let us turn now to the contractive properties we can prove using this controlled coupling by reflection.
\begin{proposition}\label{prop:contr_same_drift}
Assume \Cref{ass:coupling} and that
  $\alpha$ is bounded and locally Lipschitz continuous. 
  Suppose that $\zeta$ has a finite first order moment. Let
$(X_s,\hat{X}_s)_{s\geq 0}$ be a solution of  \eqref{eq:coup_by_ref_contraction_2}-\eqref{eq:coup_by_ref_contraction_2_2} and denote by $\mu_t$ and $\hat{\mu}_t$ the distribution of $X_t$ and $\hat{X}_t$ respectively for $t \geq 0$.
Let $\bar{\kappa} \in \msk$ such that for any $r>0$ $\kappa_{\cV}(r) \geq \bar{\kappa}(r)$. The following holds.
\begin{enumerate}[label=(\roman*)]
\item\label{item_2:contraction_coup_by_ref}
For any $t \geq 0$,
\begin{equation}
\bbE[f_{\bar{\kappa}}(|X_t-\hat{X}_t|)]\leq e^{-\lambda_{\bar{\kappa}} t} \bbE[ f_{\bar{\kappa}}(|X_0-\hat{X}_0|)] \eqsp.
\end{equation}
 As a corollary, we have 
 $$W_{f_{\bar{\kappa}}}(\mu_t,\hat{\mu}_t)\leq e^{-\lambda_{\bar{\kappa}} t} W_{f_{\bar{\kappa}}}(\mu_0,\hat{\mu}_0).$$
\item\label{item:final_TV_coup_by_ref}
For any $t \geq 0$,
    \begin{equation}
        \bbP(X_t\neq\hat{X}_t)\leq q^{\bar{\kappa}}_t  \bbE[ f_{\bar{\kappa}}(|X_0-\hat{X}_0|)]    \label{eq:1} \eqsp,
      \end{equation}
 where for $\kappa\in \msk$, $t\geq 0$ we define
\begin{equation}\label{q_kappa_t_def}
q^{{\kappa}}_t =
\begin{cases}
\frac{1}{\sqrt{2\pi t}C_{{\kappa}}} & t < \frac{1}{2\lambda_{\kappa}}\\
    \frac{\sqrt{\lambda_{\kappa} e}}{{\sqrt{\pi}C_{{\kappa}}}} e^{-\lambda_{\kappa} t} &  t \geq  \frac{1}{2\lambda_{\kappa}}
\end{cases}.
\end{equation} 
As a corollary, we have 
$$\|\mu_t-\hat\mu_t \|_{\TV}\leq q^{\bar\kappa}_t W_{f_{\bar{\kappa}}}(\mu_0,\hat{\mu}_0).$$
\end{enumerate}
  \end{proposition}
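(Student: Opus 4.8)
The strategy is to apply Itô's formula to $t \mapsto f_{\bar\kappa}(|X_t - \hat X_t|)$ and exploit the differential inequality \eqref{eq:Eberle_funct_2} from Proposition \ref{prop:Lyapunov_funct_coup_by_ref}. First I would set $r_t = |X_t - \hat X_t|$ and observe that on $[0, T_0)$ the two equations \eqref{eq:coup_by_ref_contraction_2} share the \emph{same} control drift $\alpha_t(X_t)$, so the difference process $X_t - \hat X_t$ has drift $-(\nabla V_t(X_t) - \nabla V_t(\hat X_t))$ — the $\alpha$-terms cancel exactly. This is the whole point of the controlled coupling: the suboptimal-control trick makes the nasty term disappear at the level of the difference. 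The mirror Brownian motion \eqref{eq:def_mirror_brownian} is built so that the noise acting on $r_t$ is genuinely one-dimensional with diffusion coefficient $\sqrt{8}$ (the reflection doubles the normal component), while the tangential noise does not affect $r_t$. Thus $\De r_t = b_t\, \De t + \sqrt{8}\, \De W_t$ for a one-dimensional Brownian motion $W$, with $b_t = \langle \nabla V_t(\hat X_t) - \nabla V_t(X_t), e_t\rangle \le -r_t \kappa_{V_t}(r_t) \le -r_t \bar\kappa(r_t)$ by definition of the convexity profile and the hypothesis $\kappa_{\cV} \ge \bar\kappa$. (One must be slightly careful since $\bar\kappa(r)$ may be negative for small $r$; but $f_{\bar\kappa}'$ is decreasing and nonnegative, so the sign works out in the inequality below.)

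Next, Itô's formula gives, on $[0,T_0)$,
\begin{equation*}
\De f_{\bar\kappa}(r_t) = f_{\bar\kappa}'(r_t) b_t\, \De t + 4 f_{\bar\kappa}''(r_t)\, \De t + \sqrt{8}\, f_{\bar\kappa}'(r_t)\, \De W_t,
\end{equation*}
where the second-derivative term must be handled via the fact that $f_{\bar\kappa}$ is only $C^1$ with absolutely continuous derivative (Proposition \ref{prop:Lyapunov_funct_coup_by_ref}\ref{item_1:Lyapunov_funct_coup_by_ref}) — so one invokes an Itô–Tanaka / generalized Itô formula, noting $f_{\bar\kappa}$ concave means the local-time contribution has a favorable sign and can be dropped. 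Using $b_t \le -r_t\bar\kappa(r_t)$ and $f_{\bar\kappa}' \ge 0$ together with \eqref{eq:Eberle_funct_2}, the drift is bounded above by $4 f_{\bar\kappa}''(r_t) - r_t \bar\kappa(r_t) f_{\bar\kappa}'(r_t) \le -\lambda_{\bar\kappa} f_{\bar\kappa}(r_t)$. After $T_0$ the two processes coincide and $f_{\bar\kappa}(r_t) = 0$, which is consistent. Taking expectations (the stochastic integral is a true martingale after localization, using the linear growth $f_{\bar\kappa}(r) \le r$ and moment bounds on the SDE), Grönwall yields $\bbE[f_{\bar\kappa}(r_t)] \le e^{-\lambda_{\bar\kappa} t}\, \bbE[f_{\bar\kappa}(r_0)]$, which is \ref{item_2:contraction_coup_by_ref}. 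The Wasserstein corollary is immediate by optimizing over the initial coupling $\pi \in \Pi(\mu_0,\hat\mu_0)$ and using that $W_{f_{\bar\kappa}}$ is the infimum of $\bbE[f_{\bar\kappa}(|X_0-\hat X_0|)]$.

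For \ref{item:final_TV_coup_by_ref}, note $\{X_t \ne \hat X_t\} = \{t < T_0\}$ since the processes stick together after coupling, so $\bbP(X_t \ne \hat X_t) = \bbP(T_0 > t)$. The plan is to compare $r_t$ (before $T_0$) with the one-dimensional diffusion $\De \rho_t = \sqrt{8}\,\De W_t$ absorbed at $0$ — the drift of $r_t$ only helps, since whenever $\bar\kappa(r_t) \ge 0$ it pushes toward $0$, and the region where $\bar\kappa < 0$ is controlled by the construction of $\phi_{\bar\kappa}$. Concretely, I would apply Itô to $r_t \wedge f_{\bar\kappa}$ again but keep the local-time term, or better: follow \cite{eberle2016ReflectionCouplingsContraction} and estimate $\bbP(T_0 > t)$ by a reflection-principle argument for Brownian motion with the twisted distance, splitting into the two regimes $t < 1/(2\lambda_{\bar\kappa})$ (where one uses the Gaussian heat-kernel bound $\bbP(T_0 > t) \lesssim \bbE[r_0]/\sqrt{t}$, converted through $C_{\bar\kappa} r \le f_{\bar\kappa}(r)$) and $t \ge 1/(2\lambda_{\bar\kappa})$ (where one first runs the contraction from \ref{item_2:contraction_coup_by_ref} up to time $t - 1/(2\lambda_{\bar\kappa})$, then applies the short-time bound, producing the exponential factor and the constant $\sqrt{\lambda_{\bar\kappa} e}/(\sqrt\pi C_{\bar\kappa})$). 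The TV corollary follows since $(X_t,\hat X_t)$ is a coupling of $\mu_t,\hat\mu_t$ (for the marginal laws this holds regardless of $\alpha$, by Lévy's characterization of $\hat B^1$), so $\|\mu_t - \hat\mu_t\|_{\TV} \le \bbP(X_t \ne \hat X_t)$.

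The main obstacle I anticipate is the rigorous justification of the generalized Itô formula for the merely $C^{1,1}$ function $f_{\bar\kappa}$ applied to $r_t$ — in particular controlling the behavior near $r_t = 0$ (where $e_t$ is undefined) and verifying that the local-time term at $0$ can be discarded; this is exactly where the construction of the coupling that forces the processes to coalesce at $T_0$, rather than reflect through, is essential. That said, all of this is carried out in \cite{cecchin2024ExponentialTurnpikePhenomenon} in the present controlled setting, so here one can largely cite it; the role of this section is to record the precise contraction constants $\lambda_{\bar\kappa}, C_{\bar\kappa}$ and the two-regime bound $q^{\bar\kappa}_t$ that feed into the Hessian estimates of Section \ref{sec:bds_Hess}.
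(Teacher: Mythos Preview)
The paper does not give its own proof of this proposition; at the start of Section~\ref{sec:coup_refl} it simply states ``For the proofs we refer to \cite{cecchin2024ExponentialTurnpikePhenomenon}.'' Your sketch is the standard Eberle reflection-coupling argument (cancellation of the shared control drift, It\^o--Tanaka on $f_{\bar\kappa}(r_t)$ combined with the differential inequality \eqref{eq:Eberle_funct_2}, and the short-time/long-time splitting for the coupling-time bound) and is correct; you yourself note that the details are in the cited reference, so your proposal is entirely consistent with what the paper does.
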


\section{Bounds on the Hessian}\label{sec:bds_Hess}
In order to prove \Cref{thm:heat_flow_Lip_Lip_pert}, we use the control interpretation of $\varphi_{t}=-\log P_{T-t}e^{-W}$ highlighted in \Cref{sec:contr_interpretation} and the (controlled) coupling by reflection described in \Cref{sec:coup_refl} to prove suitable bounds on the Hessian of $V_t = \varphi_{T-t}$.

Let us start by proving a gradient estimate.
\begin{proposition}\label{prop:Lip_pert_grad_bound}
Under \Cref{ass:U_and_W_Lip} the following gradient estimate holds true
\begin{equation}
    \|\nabla \varphi_t \|_\infty \leq C_{\kappa_U}^{-1} e^{-\lambda_{\kappa_U} (T-t)}\const^W_1.
\end{equation}
If in addition \Cref{ass:U_bd_3rd_der}-\ref{item:unif_convex} is in place with $\alpha > 0$, then this bound improves to
\begin{equation}
    \|\nabla \varphi_t \|_\infty \leq e^{-\alpha (T-t)}\const^W_1.
\end{equation}
\end{proposition}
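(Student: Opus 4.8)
The plan is to differentiate the optimal process, or equivalently to use the Pontryagin system \eqref{eq:Pontryagin_sys}, and then control the resulting backward SDE for $Y^x_s=\nabla\varphi_s(X^x_s)$ by a coupling argument. Concretely, recall that $\varphi$ solves the HJB equation \eqref{eq:HJB_log_backward_density} with terminal data $\varphi_T=W$, and by \Cref{prop:Control_properties}\ref{item:fin_dim_OptContr} the optimal feedback is $u_s=-2\nabla\varphi_s(X_s)$ along the optimally controlled diffusion \eqref{eq:fin_dim_OptDyn}. I would fix $t\in[0,T]$ and two initial points $x,\hat x$, run the controlled coupling by reflection \eqref{eq:coup_by_ref_contraction_2}--\eqref{eq:coup_by_ref_contraction_2_2} with $V_s$ replaced by $U$ itself and with drift $\alpha_s(\cdot)=-2\nabla\varphi_s(\cdot)$, started from $(x,\hat x)$. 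Here $V_s=U$ has convexity profile $\kappa_U$, which lies in $\msk$ by \Cref{ass:U_and_W_Lip}(i), so \Cref{ass:coupling} is satisfied with $\bar\kappa=\kappa_U$.

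The first main step is the Lipschitz bound on $\varphi_t$ itself. Because $W$ is $\const^W_1$-Lipschitz, for the two optimally controlled diffusions $X_s,\hat X_s$ emanating from $x,\hat x$ we can write $\varphi_t(x)-\varphi_t(\hat x)$ by plugging the control that is optimal for $x$ as a suboptimal control for $\hat x$ (this is exactly the ``admissible but suboptimal'' interpretation stressed after the coupling construction): the running costs $\tfrac14|u_s|^2$ cancel, leaving $\varphi_t(x)-\varphi_t(\hat x)\le \bbE[W(X_T)-W(\hat X_T)]\le \const^W_1\,\bbE[|X_T-\hat X_T|]$. Symmetrizing gives $|\varphi_t(x)-\varphi_t(\hat x)|\le \const^W_1\,\bbE[|X_T-\hat X_T|]$. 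Now \Cref{prop:contr_same_drift}\ref{item_2:contraction_coup_by_ref} applied with $\cV=(U)_{s\ge0}$ and $\bar\kappa=\kappa_U$ yields $\bbE[f_{\kappa_U}(|X_T-\hat X_T|)]\le e^{-\lambda_{\kappa_U}(T-t)}f_{\kappa_U}(|x-\hat x|)$, and the equivalence $C_{\kappa_U}r\le f_{\kappa_U}(r)\le r$ from \Cref{prop:Lyapunov_funct_coup_by_ref}\ref{item_1:Lyapunov_funct_coup_by_ref} converts this into $\bbE[|X_T-\hat X_T|]\le C_{\kappa_U}^{-1}e^{-\lambda_{\kappa_U}(T-t)}|x-\hat x|$. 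Combining, $|\varphi_t(x)-\varphi_t(\hat x)|\le C_{\kappa_U}^{-1}e^{-\lambda_{\kappa_U}(T-t)}\const^W_1|x-\hat x|$, i.e. $\|\nabla\varphi_t\|_\infty\le C_{\kappa_U}^{-1}e^{-\lambda_{\kappa_U}(T-t)}\const^W_1$ after letting $|x-\hat x|\to0$ (using $C^{1}$ regularity of $\varphi_t$ from \Cref{prop:Control_properties}).

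For the improved bound under \Cref{ass:U_bd_3rd_der}\ref{item:unif_convex} with $\alpha>0$, I would instead use the synchronous (common-noise) coupling rather than the reflection coupling: run $X_s,\hat X_s$ with the \emph{same} Brownian motion and the control optimal for $X$ fed into both. Then $\tfrac{\De}{\De s}|X_s-\hat X_s|^2 = -2\ip{\nabla U(X_s)-\nabla U(\hat X_s)}{X_s-\hat X_s}\le -2\alpha|X_s-\hat X_s|^2$ pathwise, since the $u_s$ and noise terms cancel exactly; Grönwall gives $|X_T-\hat X_T|\le e^{-\alpha(T-t)}|x-\hat x|$ surely, hence $\|\nabla\varphi_t\|_\infty\le e^{-\alpha(T-t)}\const^W_1$. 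The main obstacle, I expect, is the justification that the control optimal for the process started at $x$ is genuinely admissible for the process started at $\hat x$ and yields the stated one-sided inequality for the value function, together with the exchange of $\bbE$ and the pathwise contraction estimate; these are handled by the verification/representation statements in \Cref{prop:Control_properties} and the a priori bounds that make all the expectations finite, but writing the inequality cleanly (and symmetrizing to get an absolute value, and then passing to the gradient via $C^1$-regularity) is where the care is needed.
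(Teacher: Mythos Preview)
Your argument is essentially the paper's: couple by reflection with $V_s=U$ and $\alpha_s=-2\nabla\varphi_s$, use that the control optimal for $x$ is admissible for $\hat x$ so the running costs cancel, bound the terminal difference by $\const^W_1\,\bbE[|X_T-\hat X_T|]$, and invoke the $f_{\kappa_U}$-contraction plus $C_{\kappa_U}r\le f_{\kappa_U}(r)\le r$. Two minor points: your opening sentence about the Pontryagin system is a red herring---that is used for the Hessian bound in \Cref{thm:Hess_est}, not here---and your one-sided inequality has the sign flipped (optimality at $x$ and sub-optimality at $\hat x$ give $\varphi_t(\hat x)-\varphi_t(x)\le\bbE[W(\hat X_T)-W(X_T)]$), though this is harmless after the symmetrization you perform anyway.

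For the case $\alpha>0$ the paper does not switch couplings; it simply remarks that under uniform convexity one may take $f_{\kappa_U}=\mathrm{Id}$, $C_{\kappa_U}=1$, $\lambda_{\kappa_U}=\alpha$ in the same reflection-coupling bound. Your synchronous-coupling argument is a valid and arguably more transparent alternative, since it gives the pathwise contraction $|X_T-\hat X_T|\le e^{-\alpha(T-t)}|x-\hat x|$ directly from Gr\"onwall without appealing to the Eberle machinery.
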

\begin{proof}
Let $x, \hat{x} \in \bbR^d$. Denote by $(X_s)_{s \in [t,T]}$ the unique strong solution to the optimal dynamics for for the control problem \eqref{eq:stoch_cont_prob}, which is given by \Cref{prop:Control_properties}
\begin{equation}
\De X_s =[-\nabla U(X_s) -2\nabla\varphi_s(X_{s})] \De s  + \sqrt{2}\De B_s \eqsp,\quad X_t=x.
\end{equation}
Furthermore, define the coupled process $(\hat{X}_s)_{s \in [t,T]}$ given by the controlled coupling by reflection \eqref{eq:coup_by_ref_contraction_2}-\eqref{eq:coup_by_ref_contraction_2_2} as
\begin{equation}
\De \hat{X}_s =[-\nabla U(\hat{X}_s) -2\nabla\varphi_s(X_{s})] \De s  + \sqrt{2}\De \hat{B}_s \eqsp,\quad X_t=x.
\end{equation}
Since $(-2\nabla\varphi_s(X_{s}))_{s \in [t,T]}$ is an optimal control associated to the initial condition $x$ and Brownian motion $(B_s)_{s \in[t,T]}$ and an admissible control for initial condition $\hat{x}$ and Brownian motion $(\hat{B}_s)_{s \in[t,T]}$ we obtain
\begin{align}
\varphi_t(x) &= \bbE[\int_t^T |\nabla \varphi(X_s)|^2 d s+ W(X_T)], \\
\varphi_t(\hat{x}) &\leq \bbE[\int_t^T |\nabla \varphi(X_s)|^2 d s+ W(\hat{X}_T)].
\end{align}
This gives, using Lipschitz continuity of $W$, \Cref{prop:Lyapunov_funct_coup_by_ref}-\ref{item_1:Lyapunov_funct_coup_by_ref} and finally the contraction estimate given in \Cref{prop:contr_same_drift}-\ref{item_2:contraction_coup_by_ref}
\begin{align}
\varphi_t(\hat{x}) - \varphi_t(x) &\leq \bbE[W(\hat{X}_T) - W(X_T)] \\
&\leq \frac{\const^W_1}{C_{\kappa_U}} \bbE[f_{\kappa_U}(|\hat{X}_T - X_T|)] \\
&\leq \frac{\const^W_1}{C_{\kappa_U}}  e^{\lambda_{\kappa_U}(T-t)}f_{\kappa_U}(|x - \hat{x}|)
\end{align}
We conclude the first estimate since $f_{\kappa_U}(|x - \hat{x}|) \leq |x - \hat{x}|$ again by \Cref{prop:Lyapunov_funct_coup_by_ref}-\ref{item_1:Lyapunov_funct_coup_by_ref} and exchanging the roles of $x$ and $\hat{x}$.

The second estimate readily follows by observing that if $\alpha >0$, i.e. $U$ uniformly convex, then we can choose $f_{\kappa_U}=\mathrm{Id}$, $C_{\kappa_U}=1$ and $\lambda_{\kappa_U}=\alpha$.
\end{proof}
With this gradient estimate we have a sufficient control on the monotonicity profile of the optimally controlled dynamics for the control problem \eqref{eq:stoch_cont_prob}, whose drift is given by $-\nabla U - 2\nabla \varphi_t$, i.e. coming from the time-dependent potential $\psi_t := U + 2 \varphi_t$. 
Indeed, we have thanks to \Cref{prop:Lip_pert_grad_bound}
\begin{equation}
    \kappa_{\psi_t}(r) \geq \kappa_{U}(r) - \frac{4 \const^W_1}{ C_{\kappa_U}r} e^{-\lambda_{\kappa_U} (T-t)} \geq \bar{\kappa}(r) :=  \kappa_{U}(r) - \frac{4 \const^W_1}{ C_{\kappa_U}r}.
\end{equation}
From this we obtain the following contraction estimate for optimally controlled dynamics by applying \Cref{prop:contr_same_drift}.
\begin{corollary}\label{cor:contr_opt_dyn}
Let \Cref{ass:U_and_W_Lip} hold. Setting
\begin{equation}\label{eq:def_bar_kappa}
    \bar{\kappa}(r) =  \kappa_{U}(r) - \frac{4 \const^W_1}{ C_{\kappa_U}r},
\end{equation}
we have $\bar{\kappa} \in \msk$ and for a coupling by reflection $(X_s, \hat{X}_s)_{s \in [t,T]}$ of the optimal dynamics \eqref{eq:fin_dim_OptDyn} with initial condition $(x,\hat{x})$
\begin{equation}
    \bbE[f_{\bar\kappa}(|X_s-\hat{X}_s|)] \leq e^{-\lambda_{\bar{\kappa}}(s-t)} f_{\bar{\kappa}}(|x-\hat{x}|),
\end{equation}
and
\begin{equation}
    \bbP(X_s\neq\hat{X}_s) \leq q^{\bar\kappa}_{s-t} f_{\bar{\kappa}}(|x-\hat{x}|),
\end{equation}
where $q^{\bar\kappa}_{s-t}$ has been defined in \eqref{q_kappa_t_def}.
\end{corollary}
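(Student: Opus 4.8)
The plan is to combine the gradient estimate of \Cref{prop:Lip_pert_grad_bound} with the monotonicity of the constants $\lambda_\kappa, C_\kappa$ from \Cref{prop:Lyapunov_funct_coup_by_ref}-\ref{item_3:Lyapunov_funct_coup_by_ref} to verify the hypotheses of \Cref{prop:contr_same_drift} for the optimal dynamics \eqref{eq:fin_dim_OptDyn}, viewed as an instance of the prototype SDE \eqref{eq:SDE_prototype}. Concretely, I would identify the drift of \eqref{eq:fin_dim_OptDyn} with $-\nabla V_s$ where $V_s := U + 2\varphi_s$ (equivalently take $\alpha \equiv 0$ in \eqref{eq:SDE_prototype}, or keep the controlled form with $\alpha_s = -2\nabla\varphi_s$); the key point is that this drift comes from a time-dependent potential whose weak convexity profile I need to bound below by an element of $\msk$.

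The first step is to show $\bar\kappa \in \msk$, i.e. that $\bar\kappa$ defined in \eqref{eq:def_bar_kappa} lies in the set \eqref{eq:def_msk}. Continuity on $(0,\infty)$ is clear since $\kappa_U \in C((0,\infty),\bbR)$ by \Cref{ass:U_and_W_Lip}(i) and $r \mapsto 4\const^W_1/(C_{\kappa_U} r)$ is continuous there. For the integrability near $0$: $(\bar\kappa(r))^- \leq (\kappa_U(r))^- + 4\const^W_1/(C_{\kappa_U} r)$, so $\int_0^1 r(\bar\kappa(r))^-\,\De r \leq \int_0^1 r(\kappa_U(r))^-\,\De r + \frac{4\const^W_1}{C_{\kappa_U}} < +\infty$ using \Cref{ass:U_and_W_Lip}(i). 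For the behavior at infinity: since the correction term $4\const^W_1/(C_{\kappa_U} r) \to 0$ as $r \to \infty$, we have $\liminf_{r\to\infty}\bar\kappa(r) = \liminf_{r\to\infty}\kappa_U(r) > 0$, again by \Cref{ass:U_and_W_Lip}(i). Hence $\bar\kappa \in \msk$, so $\lambda_{\bar\kappa}, C_{\bar\kappa}$ and $f_{\bar\kappa}$ are well-defined.

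The second step is the pointwise Hessian/convexity bound $\kappa_{\psi_t}(r) \geq \bar\kappa(r)$ for the potential $\psi_t = U + 2\varphi_t$ driving \eqref{eq:fin_dim_OptDyn}. For any $x \neq \hat x$ with $|x - \hat x| = r$, we have
\begin{equation}
\frac{\ip{\nabla\psi_t(x)-\nabla\psi_t(\hat x)}{x-\hat x}}{r^2} = \frac{\ip{\nabla U(x)-\nabla U(\hat x)}{x-\hat x}}{r^2} + 2\frac{\ip{\nabla\varphi_t(x)-\nabla\varphi_t(\hat x)}{x-\hat x}}{r^2} \geq \kappa_U(r) - \frac{2(\|\nabla\varphi_t(x)\| + \|\nabla\varphi_t(\hat x)\|)}{r} \geq \kappa_U(r) - \frac{4\const^W_1}{C_{\kappa_U} r},
\end{equation}
where the middle inequality is Cauchy--Schwarz and the last uses \Cref{prop:Lip_pert_grad_bound} (dropping the favorable factor $e^{-\lambda_{\kappa_U}(T-t)} \leq 1$). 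Taking the infimum over such $x, \hat x$ gives $\kappa_{\psi_t}(r) \geq \bar\kappa(r)$, uniformly in $t \in [0,T]$, hence $\kappa_{\cV}(r) \geq \bar\kappa(r)$ for $\cV = (\psi_s)_s$. This also verifies \Cref{ass:coupling}(i): one-sided Lipschitz continuity holds with $\rho = \inf_{r>0} r\,\bar\kappa(r)$ (which is finite and the correction term is bounded near $0$; continuity of the drift fields follows from $U \in C^2$ and the regularity of $\varphi_t$ in \Cref{prop:Control_properties}), and the local integrability of $t \mapsto \sup_{x\in\msc}|\nabla\psi_t(x)|$ follows from continuity of $(t,x)\mapsto\nabla\varphi_t(x)$ on $(0,T)\times\bbR^d$ together with local boundedness of $\nabla U$.

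Finally, apply \Cref{prop:contr_same_drift} with $\alpha \equiv 0$ (a coupling by reflection in the classical sense for the non-homogeneous drift $-\nabla\psi_s$), starting from the deterministic initial condition $\zeta = (x,\hat x)$ at time $t$, which trivially has finite first moment. Part \ref{item_2:contraction_coup_by_ref} gives $\bbE[f_{\bar\kappa}(|X_s - \hat X_s|)] \leq e^{-\lambda_{\bar\kappa}(s-t)} f_{\bar\kappa}(|x - \hat x|)$ and part \ref{item:final_TV_coup_by_ref} gives $\bbP(X_s \neq \hat X_s) \leq q^{\bar\kappa}_{s-t}\, f_{\bar\kappa}(|x - \hat x|)$, which is exactly the claim. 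The main obstacle here is not any single estimate but the bookkeeping of verifying \Cref{ass:coupling} in full for the time-dependent potential $\psi_t$ — in particular making sure the regularity of $\nabla\varphi_t$ from \Cref{prop:Control_properties} is strong enough (continuity up to the relevant time interval, local Lipschitz) to legitimately invoke the existence theory and the coupling construction; the convexity-profile comparison itself is the short and essential computation.
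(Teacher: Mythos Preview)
Your proposal is correct and follows exactly the paper's approach: both identify the drift of the optimal dynamics as $-\nabla\psi_s$ with $\psi_s = U + 2\varphi_s$, use the gradient bound from \Cref{prop:Lip_pert_grad_bound} together with Cauchy--Schwarz to obtain $\kappa_{\psi_s}(r) \geq \bar\kappa(r)$, and then invoke \Cref{prop:contr_same_drift}. Your write-up is in fact more detailed than the paper's (which leaves the verifications that $\bar\kappa\in\msk$ and that \Cref{ass:coupling} holds implicit); one small slip is that the one-sided Lipschitz constant is $\inf_{r>0}\kappa_{\psi_s}(r)$ rather than $\inf_{r>0} r\,\bar\kappa(r)$, but this bookkeeping point does not affect the core argument.
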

With this at hand we can now to use the Pontryagin system \eqref{eq:Pontryagin_sys} to obtain Hessian estimates on $\varphi$ to conclude Lipschitz continuity of the Langevin transport map and its inverse with \Cref{lem:Est_Lip_heat_flow}. Here, for a matrix valued function $A: \bbR^d \rightarrow \bbR^{d \times d}$ we define
\begin{equation}
    \| A \|_{\infty} := \sup\left\{|\ip{u}{A(x)u}|: x,u \in \bbR^d, |u|=1 \right\}.
\end{equation}
\begin{theorem}\label{thm:Hess_est}
We have the following Hessian bounds on the value function of the control problem \eqref{eq:stoch_cont_prob}.
\begin{enumerate}[label=(\roman*)]
\item\label{item:Hess_est_bd_2nd}  If \Cref{ass:U_and_W_Lip}-\Cref{ass:U_bd_2nd_der} holds, then
\begin{align}
\|\nabla^2 \varphi_t \|_{\infty}
&\leq 2\const^W_1 \left(\frac{\const^U_2}{C_{\kappa_U}}\left(\frac{e^{\frac{\lambda_{\kappa_U}}{2\lambda_{\bar\kappa}}}}{2\sqrt{\pi\lambda_{\bar\kappa}}C_{\bar\kappa}} e^{-\lambda_{\kappa_U}(T-t)}
+ \frac{\sqrt{\lambda_{\bar\kappa}}e^{\frac{\lambda_{\kappa_U}}{2\lambda_{\bar\kappa}}}}{(\lambda_{\kappa_U}-\lambda_{\bar\kappa})\sqrt{\pi}C_{\bar\kappa}}\left[e^{-\lambda_{\bar\kappa}(T-t)} - e^{-\lambda_{\kappa_U}(T-t)} \right]\right) + q^{\bar{\kappa}}_{T-t}   \right).
\end{align}
\item\label{item:Hess_est_bd_3rd} If \Cref{ass:U_and_W_Lip}-\Cref{ass:U_bd_3rd_der} holds, then
\begin{align}
\|\nabla^2 \varphi_t \|_{\infty} 
\leq &\, \frac{\const^W_1}{C_{\kappa_U}}\left[\frac{\const^U_3\left(e^{-\lambda_{\bar\kappa}(T-t)} - e^{-\lambda_{\kappa_U}(T-t)}\right)}{C_{\bar{\kappa}}(\lambda_{\kappa_U}-\lambda_{\bar\kappa})} + 2|\alpha| e^{\frac{\lambda_{\kappa_U}}{2\lambda_{\bar\kappa}}}\left(\frac{e^{-\lambda_{\kappa_U}(T-t)}}{2\sqrt{\pi\lambda_{\bar\kappa}}C_{\bar\kappa}} 
+ \frac{\sqrt{\lambda_{\bar\kappa}}\left[e^{-\lambda_{\bar\kappa}(T-t)} - e^{-\lambda_{\kappa_U}(T-t)} \right]}{(\lambda_{\kappa_U}-\lambda_{\bar\kappa})\sqrt{\pi}C_{\bar\kappa}} \right)  \right] \\
&+ 2 \const^W_1q^{\bar{\kappa}}_{T-t}
\end{align}
If moreover, $\alpha > 0$, then this estimate improves to
\begin{equation}
\|\nabla^2 \varphi_t \|_{\infty} \leq \const^W_1 \left(\frac{C_{\bar{\kappa}}^{-1}\const^U_3}{2 \alpha - \lambda_{\bar{\kappa}}}\left(e^{-\lambda_{\bar{\kappa}}(T-t)} - e^{-2\alpha(T-t)} \right) + 2 e^{-\alpha(T-t)} q^{\bar{\kappa}}_{T-t}   \right).
\end{equation}
\end{enumerate}

\end{theorem}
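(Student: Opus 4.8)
The plan is to estimate the Hessian $\nabla^2 \varphi_t$ by differentiating the Pontryagin system \eqref{eq:Pontryagin_sys} in the initial condition and controlling the resulting linearized process via the controlled coupling by reflection from \Cref{sec:coup_refl}. Concretely, I would start from the representation $Z^x_s = \nabla^2\varphi_s(X^x_s)$ together with the backward equation $\De Y^x_s = \nabla^2 U(X^x_s)Y^x_s\,\De s + \sqrt{2}Z^x_s\,\De B_s$ with terminal condition $Y^x_T = \nabla W(X^x_T)$. To extract a bound on $\|\nabla^2\varphi_t\|_\infty$ one should test the Hessian against a unit vector $u$: fix $x$, $u$, and look at $\ip{u}{\nabla^2\varphi_t(x)u}$, which, by a standard first-variation argument (or directly by a finite-difference comparison of two optimally controlled trajectories started at $x$ and $\hat x = x + \varepsilon u$), can be written as the limit as $\varepsilon \to 0$ of $\varepsilon^{-1}\bbE[\ip{\nabla W(X_T) - \nabla W(\hat X_T)}{\cdot}]$ along the coupling, plus a term involving the drift mismatch $\nabla U - \nabla U$ integrated against the deviation process. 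The key point is that along the controlled coupling by reflection both trajectories see the \emph{same} control $-2\nabla\varphi_s(X_s)$, so the difference process is driven only by the mismatch in $\nabla U$ and in the reflection term, and \Cref{cor:contr_opt_dyn} gives both $\bbE[f_{\bar\kappa}(|X_s-\hat X_s|)] \leq e^{-\lambda_{\bar\kappa}(s-t)}f_{\bar\kappa}(|x-\hat x|)$ and $\bbP(X_s \neq \hat X_s) \leq q^{\bar\kappa}_{s-t}f_{\bar\kappa}(|x-\hat x|)$.

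The second step is to turn the terminal and integral contributions into the explicit bounds. For the terminal term, in case \ref{item:Hess_est_bd_2nd} one has no control on $\nabla^2 W$, only $\|\nabla W\|_\infty \leq \const^W_1$; the right way to exploit this is to bound $\bbE[\ip{\nabla W(X_T)-\nabla W(\hat X_T)}{e}]$ by $2\const^W_1\,\bbP(X_T \neq \hat X_T)$ (the integrand vanishes on the coupling event $\{X_T = \hat X_T\}$), which after dividing by $|x-\hat x|$ and using $f_{\bar\kappa}(r)\geq C_{\bar\kappa}r$... — more precisely, using $\bbP(X_T\neq \hat X_T) \leq q^{\bar\kappa}_{T-t}f_{\bar\kappa}(|x-\hat x|) \leq q^{\bar\kappa}_{T-t}|x-\hat x|$ — produces the $q^{\bar\kappa}_{T-t}$ term. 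For the integral term, one needs to propagate $Y$ backward: since $\|\nabla^2 U\| \leq \const^U_2$ (resp. $|\alpha|$ in case \ref{item:Hess_est_bd_3rd}), the $\nabla^2 U(X_s)Y_s\,\De s$ drift contributes, when one differentiates in the initial condition and takes expectations, a term of the form $\int_t^T \const^U_2\,\bbE[|$ difference of $Y$'s $|]\,\De s$ controlled via $\|\nabla\varphi_s\|_\infty \leq C_{\kappa_U}^{-1}e^{-\lambda_{\kappa_U}(T-s)}\const^W_1$ from \Cref{prop:Lip_pert_grad_bound} together with the coupling decay; this is where the Duhamel-type convolution $\int_t^T e^{-\lambda_{\kappa_U}(T-s)}q^{\bar\kappa}_{s-t}\,\De s$ and $\int_t^T e^{-\lambda_{\kappa_U}(T-s)}e^{-\lambda_{\bar\kappa}(s-t)}\,\De s$ appear, and evaluating these integrals (splitting $q^{\bar\kappa}$ at $s-t = 1/(2\lambda_{\bar\kappa})$ and bounding $\int_0^{1/(2\lambda_{\bar\kappa})}(2\pi r)^{-1/2}\,\De r$) gives exactly the coefficients $\tfrac{e^{\lambda_{\kappa_U}/(2\lambda_{\bar\kappa})}}{2\sqrt{\pi\lambda_{\bar\kappa}}C_{\bar\kappa}}$ and $\tfrac{\sqrt{\lambda_{\bar\kappa}}e^{\lambda_{\kappa_U}/(2\lambda_{\bar\kappa})}}{(\lambda_{\kappa_U}-\lambda_{\bar\kappa})\sqrt\pi C_{\bar\kappa}}$ in the statement. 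In case \ref{item:Hess_est_bd_3rd}, the extra Lipschitzness of $\nabla^2 U$ (constant $\const^U_3$) lets one replace the crude $\|\nabla^2 U\| \leq |\alpha|$ bound on the "difference of drifts" part by $\const^U_3 |X_s - \hat X_s|$, which after using $\bbE[|X_s-\hat X_s|] \leq C_{\bar\kappa}^{-1}\bbE[f_{\bar\kappa}(|X_s-\hat X_s|)] \leq C_{\bar\kappa}^{-1}e^{-\lambda_{\bar\kappa}(s-t)}|x-\hat x|$ yields the $\const^U_3$ term; the remaining $|\alpha|$ term comes from the part of $\nabla^2 U \cdot Y$ where one keeps $Y$ rather than its difference, weighted by the probability-of-non-coupling $q^{\bar\kappa}$.

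The refinement when $\alpha > 0$ follows by noting that then one may take $f_{\kappa_U} = \Id$, $C_{\kappa_U} = 1$, $\lambda_{\kappa_U} = \alpha$ (as already used in \Cref{prop:Lip_pert_grad_bound}), so $\|\nabla\varphi_s\|_\infty \leq e^{-\alpha(T-s)}\const^W_1$; this collapses the two-term structure coming from the split of $\nabla^2 U \cdot Y$ since $\nabla^2 U \succeq \alpha > 0$ now contributes a genuinely contractive rate $2\alpha$ (rather than $\lambda_{\kappa_U}$), and the convolution integral becomes $\int_t^T e^{-\lambda_{\bar\kappa}(s-t)}\cdot \const^U_3 e^{-\alpha(T-s)} \cdot (\text{factor})\,\De s$... giving $\tfrac{1}{2\alpha - \lambda_{\bar\kappa}}(e^{-\lambda_{\bar\kappa}(T-t)} - e^{-2\alpha(T-t)})$ after one combines the $Y$-decay ($e^{-\alpha}$, twice, since both the gradient bound and the terminal gradient contribute) with the coupling decay. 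I expect the main obstacle to be the bookkeeping of the backward SDE for the \emph{difference} of the $Y$-processes along the coupling — one must carefully argue that the stochastic integral term $\sqrt{2}Z_s\,\De B_s$ (which is exactly the object we are trying to bound, $\nabla^2\varphi$) does not obstruct a closed Grönwall-type estimate; the resolution, as in \cite{cecchin2024ExponentialTurnpikePhenomenon,conforti2023CouplingReflectionControlled}, is that the martingale part drops out under expectation after an appropriate localization, so that one is left with a genuine backward integral inequality for $\bbE[|Y_s - \hat Y_s|]$ (or rather its projection onto the relevant direction), closing via the contraction estimates of \Cref{cor:contr_opt_dyn}. The other delicate point is justifying the differentiability of $x \mapsto \varphi_t(x)$ to second order and the identification of $\nabla^2\varphi$ with $Z$ along optimal trajectories — but this is exactly the content of \Cref{prop:Control_properties}\ref{item:Pontryagin}, which we are entitled to invoke.
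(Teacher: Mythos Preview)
Your high-level strategy is right and matches the paper: one looks at the backward Pontryagin equation for $Y_s = \nabla\varphi_s(X_s)$ along a coupled pair of trajectories, estimates the drift term via the gradient bound of \Cref{prop:Lip_pert_grad_bound}, handles the terminal term $|\nabla W(X_T) - \nabla W(\hat X_T)|$ via $2\const^W_1\,\bbP(X_T\neq\hat X_T)$, and the Duhamel bookkeeping you outline is exactly what produces the stated constants.

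There is, however, a genuine error in the choice of coupling. You write that ``both trajectories see the \emph{same} control $-2\nabla\varphi_s(X_s)$''. That device is what drives the proof of \Cref{prop:Lip_pert_grad_bound} (comparing values of the \emph{cost functional}), but it is the wrong coupling here. If $\hat X$ is driven by the suboptimal control $-2\nabla\varphi_s(X_s)$ then $\hat X$ is not an optimal trajectory, and the process $\hat Y_s:=\nabla\varphi_s(\hat X_s)$ does \emph{not} satisfy the clean backward equation $\De\hat Y_s=\nabla^2U(\hat X_s)\hat Y_s\,\De s+\sqrt{2}\hat Z_s\,\De\hat B_s$: applying It\^o's formula to $\nabla\varphi_s(\hat X_s)$ produces an extra drift term $-2\nabla^2\varphi_s(\hat X_s)\bigl(\nabla\varphi_s(X_s)-\nabla\varphi_s(\hat X_s)\bigr)$, which contains precisely the Hessian you are trying to bound, and the estimate cannot close.

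The paper instead takes the \emph{standard} coupling by reflection (i.e.\ $\alpha\equiv 0$ in the notation of \Cref{sec:coup_refl}) of the pair of \emph{optimally} controlled processes, each driven by its own feedback $-2\nabla\varphi_s(X_s)$ and $-2\nabla\varphi_s(\hat X_s)$. Both $Y$ and $\hat Y$ then satisfy \eqref{eq:Pontryagin_sys}, and one applies It\^o's formula to $\sqrt{|Y_s-\hat Y_s|^2+a}$ (letting $a\downarrow 0$) to get a lower bound on $\De|Y_s-\hat Y_s|$ involving only $\nabla^2U$, $Y$, $\hat Y$ and a martingale. The contraction estimates of \Cref{cor:contr_opt_dyn} are available because the common drift field $-\nabla(U+2\varphi_s)$ has convexity profile $\geq\bar\kappa$ thanks to the gradient bound. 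From there your sketch goes through: under \Cref{ass:U_bd_2nd_der} one bounds $|\nabla^2U(X_s)Y_s-\nabla^2U(\hat X_s)\hat Y_s|\leq\const^U_2(|Y_s|+|\hat Y_s|)\,\IND_{\{X_s\neq\hat X_s\}}$; under \Cref{ass:U_bd_3rd_der} one adds and subtracts $\nabla^2U(\hat X_s)Y_s$ to split off $\alpha|Y_s-\hat Y_s|$ and $\const^U_3|X_s-\hat X_s||Y_s|$; and the $\alpha>0$ refinement comes from applying Gr\"onwall to the resulting differential inequality (the $\alpha$-term now has a favourable sign), not merely from substituting $\lambda_{\kappa_U}=\alpha$.

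A smaller point: there is no need to linearise or take $\varepsilon\to 0$. The paper works directly with the finite difference $|\nabla\varphi_t(x)-\nabla\varphi_t(\hat x)|=|Y_t-\hat Y_t|$ and divides by $|x-\hat x|$ only at the end, using $f_{\bar\kappa}(|x-\hat x|)\leq|x-\hat x|$.
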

\begin{proof}
Let $x,\hat{x} \in \bbR^d$. Consider coupling by reflection \eqref{eq:coup_by_ref_contraction_2}-\eqref{eq:coup_by_ref_contraction_2_2} of the optimally controlled dynamics of \eqref{eq:stoch_cont_prob}
\begin{align}
dX_s &=(-\nabla U(X_s)  - 2 Y_s )\De s+ \sqrt{2}\De B_s, \quad X_0=x \\
d\hat{X}_s &=(-\nabla U(\hat{X}_s)  - 2 \hat{Y}_s )\De s+ \sqrt{2}\De \hat B_s, \quad X_0=\hat{x}
\end{align}
with $Y_s = \nabla \varphi_s(X_s)$, $\hat{Y}_s = \nabla \varphi_s(\hat{X}_s)$ satisfying by \Cref{prop:Control_properties}-\ref{item:Pontryagin}
\begin{align}
\De Y_s &= \nabla^2 U(X_s) Y_s\De s + \sqrt{2}Z_s \De B_s, \quad Y_T=\nabla W(X_T), \\
\De \hat{Y}_s &= \nabla^2 U(\hat{X}_s) \hat{Y}_s\De s + \sqrt{2}\hat{Z}_s \De \hat B_s, \quad \hat{Y}_T=\nabla W(\hat{X}_T).
\end{align}
Then applying Itô's formula to the convex function $\sqrt{|Y_s - \hat{Y_s}|^2 + a}$ for $a>0$, dropping the non-negative quadratic variation term and letting $a \rightarrow 0$, we arrive at
\begin{align}
\De |Y_s - \hat{Y}_s| &\geq \frac{1}{|Y_s - \hat{Y}_s|}\ip{Y_s - \hat{Y}_s}{\nabla^2 U(X_s) Y_s -  \nabla^2 U(\hat{X}_s) \hat{Y}_s} \De s + \De M_s,
\end{align}
where $M_s$ stands for a martingale term.
Under assumption \Cref{ass:U_bd_2nd_der} we continue as follows
\begin{align}
\De |Y_s - \hat{Y}_s| 
&\geq -\const^U_2(|Y_s| + |\hat{Y}_s| ) \IND_{\{X_s\neq \hat{X}_s\}} + \De M_s \\
&\geq -2\frac{\const^U_2\const^W_1}{C_{\kappa_U}} e^{-\lambda_{\kappa_U} (T-s)} \IND_{\{X_s\neq \hat{X}_s\}} + \De M_s,
\end{align}
where we have used \Cref{prop:Lip_pert_grad_bound} for the second inequality. 
Taking expectation, this gives with \Cref{cor:contr_opt_dyn}
\begin{align}
\bbE[|Y_t - \hat{Y}_t|] &\leq \bbE[|\nabla W(X_T) - \nabla W(\hat{X}_T)|] + 2\frac{\const^U_2\const^W_1}{C_{\kappa_U}} \int_t^T e^{-\lambda_{\kappa_U} (T-s)} \bbP(X_s \neq \hat{X}_s) \De s \\
&\leq 2 \const^W_1q^{\bar{\kappa}}_{T-t} f_{\bar\kappa}(|x-\hat{x}|) + 2\frac{\const^U_2\const^W_1}{C_{\kappa_U}} \int_t^T e^{-\lambda_{\kappa_U} (T-s)} q^{\bar{\kappa}}_{s-t} \De s f_{\bar\kappa}(|x-\hat{x}|).
\end{align}
Since $Y_t = \nabla\varphi_t(x)$, $\hat{Y}_t = \nabla\varphi_t(\hat{x})$ and $f_{\bar\kappa}(|x-\hat{x}|) \leq |x-\hat{x}|$ we conclude with the explicit computations postponed to \Cref{lem:integrals}.

If instead \Cref{ass:U_bd_3rd_der} is in place, we write
\begin{align}
\De |Y_s - \hat{Y}_s|
&\geq \frac{1}{|Y_s - \hat{Y}_s|} \left( \langle Y_s - \hat{Y}_s, \nabla^2 U(\hat{X}_s) (Y_s - \hat{Y}_s)\rangle 
 + \langle Y_s - \hat{Y}_s,(\nabla^2 U(X_s) - \nabla^2 U(\hat{X}_s)) Y_s \rangle \right) + \De M_s \\
 &\geq \left(\inf_{x,u \in \bbR^d, |u|=1} \ip{u}{\nabla^2 U(x)u}\right) |Y_s - \hat{Y}_s| - |\nabla^2 U(X_s) - \nabla^2 U(\hat{X}_s)||Y_s| \\
 &\geq \alpha  (|Y_s| + |\hat{Y}_s|)\IND_{\{X_s\neq \hat{X}_s\}} - \frac{\const^U_3}{C_{\bar{\kappa}}} |Y_s| f_{\bar\kappa}(|X_s - \hat{X}_s|) \De s + \De M_s,\label{est:lowerbound_Hess_U}
\end{align}
where in the last inequality we have used the assumption and \Cref{prop:Lyapunov_funct_coup_by_ref}.
Taking expectation and using \Cref{prop:Lip_pert_grad_bound}  yields
\begin{align}
\bbE[|Y_t - \hat{Y}_t|] 
&\leq \bbE[|\nabla W(X_T) - \nabla W(\hat{X}_T)|] + \int_t^T \bbE\left[\frac{\const^U_3}{C_{\bar{\kappa}}} |Y_s|f_{\bar\kappa}(|X_s - \hat{X}_s|)- \alpha  (|Y_s| + |\hat{Y}_s|)\IND_{\{X_s\neq \hat{X}_s\}} \right] \De s \\
&\leq 2\const^W_1 \bbP(X_T \neq \hat{X}_T) + \frac{\const^W_1}{C_{\kappa_U}} \int_t^T e^{-\lambda_{\kappa_U} (T-s)}\left[\frac{\const^U_3}{C_{\bar{\kappa}}} \bbE[f_{\bar\kappa}(|X_s - \hat{X}_s|)] +2|\alpha|  \bbP(X_s\neq\hat{X}_s)\right] \De s.
\end{align}
Finally, using the contraction estimates from \Cref{cor:contr_opt_dyn} gives
\begin{align}
\bbE[|Y_t - \hat{Y}_t|] 
&\leq 2 \const^W_1q^{\bar{\kappa}}_{T-t} f_{\bar\kappa}(|x-\hat{x}|) + \frac{\const^W_1}{C_{\kappa_U}} \int_t^T e^{-\lambda_{\kappa_U} (T-s)}\left[\frac{\const^U_3}{C_{\bar{\kappa}}}e^{-\lambda_{\bar\kappa}(s-t)} + 2|\alpha| q^{\bar\kappa}_{s-t} \right]\De s f_{\bar\kappa}(|x-\hat{x}|),
\end{align}
and we conclude again with the computations postponed to \Cref{lem:integrals}.

If in addition $\alpha >0$ we can instead apply Grönwall's lemma to \eqref{est:lowerbound_Hess_U}, take expectation and use \Cref{prop:Lip_pert_grad_bound} to obtain
\begin{align}
\bbE[|Y_t - \hat{Y}_t|] 
&\leq \frac{\const^U_3}{C_{\bar{\kappa}}} \int_t^T e^{-\alpha(T-s)}\bbE[f_{\bar\kappa}(|X_s - \hat{X}_s|) |Y_s|] \De s + e^{-\alpha(T-t)}\bbE[|\nabla W(X_T) - \nabla W(\hat{X}_T)|] \\
&\leq \frac{\const^U_3\const^W_1}{C_{\bar{\kappa}}} \int_t^T e^{-2\alpha(T-s)}\bbE[f_{\bar\kappa}(|X_s - \hat{X}_s|)] \De s + 2\const^W_1 e^{-\alpha(T-t)}\bbP(X_T \neq \hat{X}_T).
\end{align}
Now applying \Cref{cor:contr_opt_dyn} gives
\begin{align}
\bbE[|Y_t - \hat{Y}_t|] 
&\leq \frac{\const^U_3 \const^W_1}{C_{\bar{\kappa}}} \left( \int_t^T e^{-2\alpha(T-s)}e^{-\lambda_{\bar{\kappa}}(s-t)} \De s \right) f_{\bar\kappa}(|x-\hat{x}|) + 2\const^W_1 e^{-\alpha(T-t)} q^{\bar{\kappa}}_{T-t}  f_{\bar\kappa}(|x-\hat{x}|) \\
&= \const^W_1 f_{\bar\kappa}(|x-\hat{x}|) \left(\frac{C_{\bar{\kappa}}^{-1}\const^U_3}{2 \alpha - \lambda_{\bar{\kappa}}}\left(e^{-\lambda_{\bar{\kappa}}(T-t)} - e^{-2\alpha(T-t)} \right) + 2e^{-\alpha(T-t)} q^{\bar{\kappa}}_{T-t}   \right),
\end{align}
and we conclude as before.
\end{proof}
We are now in position to prove the main theorem.
\begin{proof}[Proof of \Cref{thm:heat_flow_Lip_Lip_pert}]
We recall that $V_t = \varphi_{T-t}$. This allows us to use the Hessian estimates obtained in \Cref{thm:Hess_est}.
\begin{itemize}
\item \underline{Proof of \ref{item:Lip_Hess_bd}}\\
\noindent If \Cref{ass:U_bd_2nd_der} is in place, we obtain the following Hessian bound on $V_t$ from \Cref{thm:Hess_est}-\ref{item:Hess_est_bd_2nd}
\begin{align}
\|\nabla^2 V_t \|_{\infty}
&\leq 2\const^W_1 \left(\frac{\const^U_2}{C_{\kappa_U}}\left(\frac{e^{\frac{\lambda_{\kappa_U}}{2\lambda_{\bar\kappa}}}}{2\sqrt{\pi\lambda_{\bar\kappa}}C_{\bar\kappa}} e^{-\lambda_{\kappa_U}t}
+ \frac{\sqrt{\lambda_{\bar\kappa}}e^{\frac{\lambda_{\kappa_U}}{2\lambda_{\bar\kappa}}}}{(\lambda_{\kappa_U}-\lambda_{\bar\kappa})\sqrt{\pi}C_{\bar\kappa}}\left[e^{-\lambda_{\bar\kappa}t} - e^{-\lambda_{\kappa_U}t} \right]\right) + q^{\bar{\kappa}}_{t}   \right).
\end{align}
Integrating this bound with the help of the computations of \Cref{lem:integrals} yields the result by applying  \Cref{lem:Est_Lip_heat_flow}.
\item \underline{Proof of \ref{item:Lip_3rd_der_bd}}\\
Under \Cref{ass:U_bd_2nd_der} we obtain with \Cref{thm:Hess_est}-\ref{item:Hess_est_bd_3rd}

\begin{align}
\|\nabla^2 V_t \|_{\infty} 
\leq &\, \frac{\const^W_1}{C_{\kappa_U}}\left[\frac{\const^U_3\left(e^{-\lambda_{\bar\kappa}t} - e^{-\lambda_{\kappa_U}t}\right)}{C_{\bar{\kappa}}(\lambda_{\kappa_U}-\lambda_{\bar\kappa})} + 2|\alpha| e^{\frac{\lambda_{\kappa_U}}{2\lambda_{\bar\kappa}}}\left(\frac{e^{-\lambda_{\kappa_U}t}}{2\sqrt{\pi\lambda_{\bar\kappa}}C_{\bar\kappa}} 
+ \frac{\sqrt{\lambda_{\bar\kappa}}\left[e^{-\lambda_{\bar\kappa}t} - e^{-\lambda_{\kappa_U}t} \right]}{(\lambda_{\kappa_U}-\lambda_{\bar\kappa})\sqrt{\pi}C_{\bar\kappa}} \right)  \right] \\
&+ 2 \const^W_1q^{\bar{\kappa}}_{t}.
\end{align}
With the help of \Cref{lem:integrals} and  \Cref{lem:Est_Lip_heat_flow} we obtain the result.
If in addition $\alpha>0$ we get
\begin{equation}\label{eq:bd_Hessian_VF}
    \|\nabla^2 V_t\|_{\infty} \leq \const^W_1 \left(\frac{C_{\bar{\kappa}}^{-1}\const^U_3}{2 \alpha - \lambda_{\bar{\kappa}}}\left(e^{-\lambda_{\bar{\kappa}}t} - e^{-2\alpha t} \right) + 2e^{-\alpha t} q^{\bar{\kappa}}_{t}   \right)
\end{equation}
Now we conclude with \Cref{lem:Est_Lip_heat_flow} using \Cref{lem:integrals}
\begin{align}
&\int_0^\infty \const^W_1 \left(\frac{C_{\bar{\kappa}}^{-1}\const^U_3}{2 \alpha - \lambda_{\bar{\kappa}}}\left(e^{-\lambda_{\bar{\kappa}}t} - 2e^{-2\alpha t} \right) + 2e^{-\alpha t} q^{\bar{\kappa}}_{t}   \right) \De t  \\
\leq &\, \const^W_1 \left( \frac{C_{\bar{\kappa}}^{-1}\const^U_3}{2\lambda_{\bar{\kappa}}\alpha} + \frac{\sqrt{2}}{\sqrt{\pi\lambda_{\bar{\kappa}}}C_{\bar{\kappa}}} + \frac{\sqrt{2\lambda_{\bar{\kappa}}}}{\sqrt{\pi e}C_{\bar{\kappa}}(\alpha + \lambda_{\bar{\kappa}})} \right) \\
= &\, \const^W_1 C_{\bar{\kappa}}^{-1} \left(  \frac{\const^U_3}{2\lambda_{\bar{\kappa}}\alpha} + \frac{\sqrt{2}}{\sqrt{\pi\lambda_{\bar{\kappa}}}} + \frac{\sqrt{2\lambda_{\bar{\kappa}}}}{\sqrt{\pi e}(\alpha + \lambda_{\bar{\kappa}})} \right).
\end{align}
\end{itemize}
\end{proof}

\appendix
\section{Technical calculations}
Here we collect some technical calculations.
\begin{lemma}\label{lem:integrals}
In the setting of \Cref{thm:heat_flow_Lip_Lip_pert}, resp. \Cref{thm:Hess_est} we have the following estimates.
\begin{itemize}
\item 
\begin{equation}
\int_t^T e^{-\lambda_{\kappa_U} (T-s)} q^{\bar{\kappa}}_{s-t} \De s 
\leq \frac{e^{\frac{\lambda_{\kappa_U}}{2\lambda_{\bar\kappa}}}}{2\sqrt{\pi\lambda_{\bar\kappa}}C_{\bar\kappa}} e^{-\lambda_{\kappa_U}(T-t)}
+ \frac{\sqrt{\lambda_{\bar\kappa}}e^{\frac{\lambda_{\kappa_U}}{2\lambda_{\bar\kappa}}}}{(\lambda_{\kappa_U}-\lambda_{\bar\kappa})\sqrt{\pi}C_{\bar\kappa}}\left[e^{-\lambda_{\bar\kappa}(T-t)} - e^{-\lambda_{\kappa_U}(T-t)} \right]
\end{equation}
\item 
\begin{equation}
    \int_0^\infty e^{-\alpha t} q^{\bar{\kappa}}_{t} \De t 
    \leq  \frac{1}{\sqrt{2\pi\lambda_{\bar{\kappa}}}C_{\bar{\kappa}}} + \frac{\sqrt{\lambda_{\bar{\kappa}}}}{\sqrt{2\pi e}C_{\bar{\kappa}}(\alpha + \lambda_{\bar{\kappa}})} 
\end{equation}
\item
\begin{equation}
    \int_0^\infty q^{\bar\kappa}_t \De t = \frac{\sqrt{2}}{\sqrt{\pi\lambda_{\bar\kappa}}C_{\bar\kappa}}
\end{equation}
\end{itemize}
\end{lemma}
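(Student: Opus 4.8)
The plan is to prove all three estimates by elementary integration, in each case splitting the domain of integration at the instant where the definition \eqref{q_kappa_t_def} of $q^{\bar\kappa}$ changes regime: for the second and third integrals this is $t=\tfrac{1}{2\lambda_{\bar\kappa}}$, and for the first it is $s=t+\tfrac{1}{2\lambda_{\bar\kappa}}$, which after the substitution $r=s-t$ and factoring out $e^{-\lambda_{\kappa_U}(T-t)}$ reduces matters to bounding $\int_0^{T-t}e^{\lambda_{\kappa_U}r}q^{\bar\kappa}_r\,\De r$. On the short piece $q^{\bar\kappa}$ is a constant multiple of $r\mapsto r^{-1/2}$, with antiderivative proportional to $\sqrt r$; on the long piece it is a constant multiple of $e^{-\lambda_{\bar\kappa}r}$, which integrates to an exponential. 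The only non-calculus ingredients are the monotonicity relations of \Cref{prop:Lyapunov_funct_coup_by_ref}-\ref{item_3:Lyapunov_funct_coup_by_ref}: since $\bar\kappa\le\kappa_U$ one has $\lambda_{\bar\kappa}\le\lambda_{\kappa_U}$, and in the uniformly convex case $\bar\kappa(r)=\alpha-4\const^W_1/r\le\alpha$ gives $\lambda_{\bar\kappa}\le\alpha$.

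I would treat the third identity first. Splitting at $\tfrac{1}{2\lambda_{\bar\kappa}}$, the short piece is obtained by evaluating the $\sqrt t$ antiderivative at $\tfrac{1}{2\lambda_{\bar\kappa}}$ and the long piece is a single exponential integral; because $q^{\bar\kappa}$ is continuous at the split point the two contributions turn out to be equal, and summing and simplifying yields $\tfrac{\sqrt 2}{\sqrt{\pi\lambda_{\bar\kappa}}C_{\bar\kappa}}$. For the second bound the only change is the extra factor $e^{-\alpha t}$: on the short piece I bound it by $1$ and recover the same $t^{-1/2}$ integral, while on the long piece I compute $\int_{1/(2\lambda_{\bar\kappa})}^\infty e^{-(\alpha+\lambda_{\bar\kappa})t}\,\De t$ exactly (against the prefactor of the exponential branch) and then absorb the residual boundary factor via $\lambda_{\bar\kappa}\le\alpha$, which turns $e^{-\alpha/(2\lambda_{\bar\kappa})}$ into $e^{-1/2}$ and produces the $\tfrac{\sqrt{\lambda_{\bar\kappa}}}{\sqrt{2\pi e}\,C_{\bar\kappa}(\alpha+\lambda_{\bar\kappa})}$ term.

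For the first bound, on $[0,\tfrac{1}{2\lambda_{\bar\kappa}}]$ I bound $e^{\lambda_{\kappa_U}r}\le e^{\lambda_{\kappa_U}/(2\lambda_{\bar\kappa})}$, pull this constant out, and integrate $r^{-1/2}$; after reinstating $e^{-\lambda_{\kappa_U}(T-t)}$ this gives the first term of the asserted bound. On $[\tfrac{1}{2\lambda_{\bar\kappa}},T-t]$ I integrate the exponential branch of $q^{\bar\kappa}$ against $e^{\lambda_{\kappa_U}r}$; reinstating $e^{-\lambda_{\kappa_U}(T-t)}$ produces an expression of the form $A\,e^{-\lambda_{\bar\kappa}(T-t)}-B\,e^{-\lambda_{\kappa_U}(T-t)}$ with $B\ge A>0$ (the ratio $B/A$ being $e^{(\lambda_{\kappa_U}-\lambda_{\bar\kappa})/(2\lambda_{\bar\kappa})}\ge 1$), which I bound above by $A\big(e^{-\lambda_{\bar\kappa}(T-t)}-e^{-\lambda_{\kappa_U}(T-t)}\big)$ and then enlarge $A$ to the constant in the statement using $\lambda_{\bar\kappa}\le\lambda_{\kappa_U}$. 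The sub-case $T-t<\tfrac{1}{2\lambda_{\bar\kappa}}$, where only the short piece occurs, is immediate, and if $\lambda_{\kappa_U}=\lambda_{\bar\kappa}$ the quotient $\tfrac{e^{-\lambda_{\bar\kappa}(T-t)}-e^{-\lambda_{\kappa_U}(T-t)}}{\lambda_{\kappa_U}-\lambda_{\bar\kappa}}$ is interpreted as its limit $(T-t)e^{-\lambda_{\bar\kappa}(T-t)}$ and the bound persists by continuity.

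All of this is routine; the only mild obstacle is bookkeeping the numerical constants so that the boundary exponentials generated at the split point $\tfrac{1}{2\lambda_{\bar\kappa}}$ are precisely the $e^{\lambda_{\kappa_U}/(2\lambda_{\bar\kappa})}$ and $e^{-1/2}$ factors appearing on the right-hand sides — which is exactly where the monotonicity of $\lambda_{(\cdot)}$ in the convexity profile is invoked — and keeping track of which terms must be bounded above and which dropped.
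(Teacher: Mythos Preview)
Your proposal is correct and follows essentially the same route as the paper: split each integral at the regime change $1/(2\lambda_{\bar\kappa})$ of $q^{\bar\kappa}$, bound the exponential factor crudely on the short $t^{-1/2}$ piece, integrate the exponential piece exactly, and invoke the monotonicity $\lambda_{\bar\kappa}\le\lambda_{\kappa_U}$ (resp.\ $\lambda_{\bar\kappa}\le\alpha$) from \Cref{prop:Lyapunov_funct_coup_by_ref}-\ref{item_3:Lyapunov_funct_coup_by_ref} to absorb the residual boundary factors. You are in fact slightly more careful than the paper in treating the edge cases $T-t<\tfrac{1}{2\lambda_{\bar\kappa}}$ and $\lambda_{\kappa_U}=\lambda_{\bar\kappa}$, which the paper leaves implicit.
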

\begin{proof}
For the first integral we compute
\begin{align}
\int_t^T e^{-\lambda_{\kappa_U} (T-s)} q^{\bar{\kappa}}_{s-t} \De s 
&= \frac{1}{\sqrt{2\pi}C_{\bar\kappa}}\int_t^{t+\frac{1}{2\lambda_{\bar\kappa}}} e^{-\lambda_{\kappa_U}(T-s)}\frac{1}{\sqrt{s-t}} \De s
+ \frac{\sqrt{\lambda_{\bar\kappa}e}}{\sqrt{\pi}C_{\bar\kappa}}\int_{t+\frac{1}{2\lambda_{\bar\kappa}}}^T e^{-\lambda_{\kappa_U}(T-t)}e^{-\lambda_{\bar\kappa}(s-t)} \De s\\
&\leq \frac{e^{\frac{\lambda_{\kappa_U}}{2\lambda_{\bar\kappa}}}}{2\sqrt{\pi\lambda_{\bar\kappa}}C_{\bar\kappa}} e^{-\lambda_{\kappa_U}(T-t)}
+ \frac{\sqrt{\lambda_{\bar\kappa}e}}{(\lambda_{\kappa_U}-\lambda_{\bar\kappa})\sqrt{\pi}C_{\bar\kappa}}\left[e^{-\lambda_{\bar\kappa}(T-t)} - e^{-\lambda_{\kappa_U}(T-t - \frac{1}{2\lambda_{\bar\kappa}}) - \frac{1}{2}} \right].
\end{align}
The bound on the second integral follows similarly
\begin{align}
\int_0^\infty e^{-\alpha t} q^{\bar{\kappa}}_{t} \De t 
&= \frac{1}{2\sqrt{\pi}C_{\bar{\kappa}}}\int_0^{\frac{1}{2\lambda_{\bar{\kappa}}}} e^{-\alpha t} \frac{1}{\sqrt{t}} \De t + \frac{\sqrt{\lambda_{\bar{\kappa}} e}}{\sqrt{2\pi}C_{\bar{\kappa}}}\int_{\frac{1}{2\lambda_{\bar{\kappa}}}}^\infty e^{-(\alpha + \lambda_{\bar{\kappa}})t} \De t \\
&= \frac{1}{\sqrt{2\pi\alpha}C_{\bar{\kappa}}} \int_0^{\sqrt{\frac{\alpha}{\lambda_{\bar{\kappa}}}}} e^{-\frac{s^2}{2}} \De s + \frac{\sqrt{\lambda_{\bar{\kappa}} e}}{\sqrt{2\pi}C_{\bar{\kappa}}} \frac{e^{-(\alpha+\lambda_{\bar{\kappa}})/(2\lambda_{\bar{\kappa}})}}{\alpha + \lambda_{\bar{\kappa}}} \\
&\leq  \frac{1}{\sqrt{2\pi\lambda_{\bar{\kappa}}}C_{\bar{\kappa}}} + \frac{\sqrt{\lambda_{\bar{\kappa}}}}{\sqrt{2\pi e}C_{\bar{\kappa}}(\alpha + \lambda_{\bar{\kappa}})}.
\end{align}
The value of the last integral follows by noting that
\begin{align}
\int_0^T q^{\bar{\kappa}}_{s} \De s 
= \frac{1}{\sqrt{2\pi \lambda_{\bar{\kappa}}}C_{\bar{\kappa}}} \left(1 + \sqrt{e}(e^{-1/2} - e^{-\lambda_{\bar{\kappa}}T }) \right) = \frac{1}{\sqrt{2\pi\lambda_{\bar{\kappa}}} C_{\bar{\kappa}}}(2 - \sqrt{e} e^{-\lambda_{\bar{\kappa}}T }) ,
\end{align}
and taking $T \rightarrow \infty$.
\end{proof}

\printbibliography
\end{document}